\def\numberlikeadb{\global\def\theequation{\thesection.\arabic{equation}}}
\newtheorem{theorem}{Theorem}[section]
\newtheorem{lemma}[theorem]{Lemma}
\newtheorem{corollary}[theorem]{Corollary}
\newtheorem{proposition}[theorem]{Proposition}
\newtheorem{remark}[theorem]{Remark}
\numberwithin{equation}{section}
\begin{document}

\title{Rates of convergence in normal approximation under moment conditions via new bounds on solutions of the Stein equation}
\author{Robert E. Gaunt\footnote{Department of Statistics,
University of Oxford, 1 South Parks Road, OXFORD OX1 3TG, UK; supported by EPSRC research grant AMRYO100. }
}

\date{June 2014} 
\maketitle

\begin{abstract}New bounds for the $k$-th order derivatives of the solutions of the normal and multivariate normal Stein equations are obtained.  Our general order bounds involve fewer derivatives of the test function than those in the existing literature.  We apply these bounds and local approach couplings to obtain an order $n^{-(p-1)/2}$ bound, for smooth test functions, for the distance between the distribution of a standardised sum of independent and identically distributed random variables and the standard normal distribution when the first $p$ moments of these distributions agree.  We also obtain a bound on the convergence rate of a sequence of distributions to the normal distribution when the moment sequence converges to normal moments. 
\end{abstract}

\noindent{{\bf{Keywords:}}} Stein's method, normal distribution, multivariate normal distribution, rate of convergence

\noindent{{{\bf{AMS 2010 Subject Classification:}}} 60F05

\section{Introduction}

In 1972, Stein \cite{stein} introduced a powerful method for assessing the distance between a probability distribution and the normal distribution. Central to the technique is the following inhomogeneous differential equation, known as the Stein equation: 
\begin{equation} \label{normal equation} f''(w)-wf'(w)=h(w)-\Phi h,
\end{equation} 
where $\Phi h$ denotes the quantity $\mathbb{E}h(Z)$ for $Z\sim N(0,1)$, and the test function $h$ is real-valued.  Evaluating both sides of (\ref{normal equation}) at a random variable $W$ and taking expectations gives
\begin{equation} \label{expect} \mathbb{E}[f''(W)-Wf'(W)]=\mathbb{E}h(W)-\Phi h.
\end{equation}
Thus, we can bound the quantity $\mathbb{E}h(W)-\Phi h$ by solving the Stein equation (\ref{normal equation}) and then bounding the left-hand side of (\ref{expect}).  

Typically, the left-hand side of (\ref{expect}) is bounded by Taylor expanding about a random variable coupled with $W$.  As a result of the Taylor expansions, we often require bounds on at the least the first three derivatives of the solution of the Stein equation (\ref{normal equation}).  Stein \cite{stein2} showed that when the test function $h$ is bounded and absolutely continuous the unique bounded solution of the Stein equation (\ref{normal equation}) is given by
\begin{eqnarray} \label{hundred} f'(w)&=&-\mathrm{e}^{w^2/2}\int_{w}^{\infty}[h(t)-\Phi h]\mathrm{e}^{-t^2/2}\,\mathrm{d}t \\
\label{hundred1}&=&\mathrm{e}^{w^2/2}\int_{-\infty}^w[h(t)-\Phi h]\mathrm{e}^{-t^2/2}\,\mathrm{d}t,
\end{eqnarray}
and that the following bounds on its derivatives hold
\begin{equation}\label{doblerbound} \|f'\| \leq \sqrt{\frac{\pi}{2}}\|h-\Phi h\|, \qquad \|f''\| \leq 2\|h-\Phi h\|, \qquad \|f^{(3)}\| \leq 2\|h'\|,
\end{equation}
where $\|f\|:=\|f\|_{\infty}=\sup_{x\in\mathbb{R}}|f(x)|$.

Over the years, Stein's method has been extended to a variety of distributions; for an overview see Reinert \cite{reinert 0}.  Stein's method was adapted to the multivariate normal distribution by Barbour \cite{barbour2} and G\"{o}tze \cite{gotze}.  Barbour and G\"{o}tze recognised the left-hand side of the Stein equation (\ref{normal equation}) as the generator of an Ornstein-Uhlenbeck process and used the theory of generators of stochastic processes to solve the Stein equation and bound the derivatives of the solution.  

Let $\Sigma$ be a $d\times d$ positive-definite matrix, then a Stein equation for the multivariate normal distribution with mean $\mathbf{0}$ and covariance matrix $\Sigma$ (see Goldstein and Rinott \cite{goldstein1}) is given by
\begin{equation} \label{mvn1} \nabla^T\Sigma\nabla f(\mathbf{w})-\mathbf{w}^T\nabla f(\mathbf{w})=h(\mathbf{w})-\mathbb{E}h(\Sigma^{1/2}\mathbf{Z}),
\end{equation}
where $\mathbf{Z}$ denotes a random vector having standard multivariate normal distribution of dimension $d$.  The solution of (\ref{mvn1}) is given by
\begin{equation}\label{mvnsoln}f(\mathbf{w})=-\int_{0}^{\infty}[\mathbb{E}h(\mathrm{e}^{-s}\mathbf{w}+\sqrt{1-\mathrm{e}^{-2s}}\Sigma^{1/2}\mathbf{Z})-\mathbb{E}h(\Sigma^{1/2}\mathbf{Z})]\,\mathrm{d}s.
\end{equation}
In the univariate case, the solution (\ref{hundred}) can be seen to be the first derivative of (\ref{mvnsoln}).  However, the representation (\ref{mvnsoln}) of the solution leads to much simpler calculations of derivatives of the solution of the normal Stein equation.  Indeed, if $\frac{\partial^k h(x)}{\prod_{j=1}^k\partial x_{i_j}}$ is bounded, then, by dominated convergence,
\begin{equation}\label{3x3min}\frac{\partial^k f(\mathbf{w})}{\prod_{j=1}^k\partial w_{i_j}}=-\int_{0}^{\infty}\mathrm{e}^{-ks}\mathbb{E}\bigg[\frac{\partial^k h}{\prod_{j=1}^k\partial w_{i_j}}(\mathrm{e}^{-s}\mathbf{w}+\sqrt{1-\mathrm{e}^{-2s}}\Sigma^{1/2}\mathbf{Z})\bigg]\mathrm{d}s.
\end{equation}
Taking absolute values and using that $\int_0^{\infty}\mathrm{e}^{-ks}\,\mathrm{d}s=\frac{1}{k}$ yields the following bound (see Barbour \cite{barbour2}, Goldstein and Rinott \cite{goldstein1} and Reinert and R\"{o}llin \cite{reinert 1}):
\begin{equation}\label{cheque} \bigg\|\frac{\partial^k f(\mathbf{w})}{\prod_{j=1}^k\partial w_{i_j}}\bigg\|\leq \frac{1}{k}\bigg\|\frac{\partial^k h(\mathbf{w})}{\prod_{j=1}^k\partial w_{i_j}}\bigg\|, \qquad k\geq 1.
\end{equation}
Meckes \cite{meckes} obtained the following bound for the $k$-th derivative of $f$ as a $k$-linear form:
\begin{equation}\label{meckesk}M_k(f)\leq\frac{1}{k}M_k(h), \quad k\geq 1,
\end{equation}
where $M_k(f):=\sup_{\mathbf{w}\in\mathbb{R}^d}\|D^kf(\mathbf{w})\|_{op}$ and $\|D^kf(\mathbf{w})\|_{op}$ is the operator norm of the $k$-th derivative of $f$ as a $k$-linear form.  The bound (\ref{meckesk}) is coordinate-free and thus could be viewed as a geometrically more natural bound than (\ref{cheque}).  Moreover, bounds given in terms of $M_k(h)$  are typically better than those give in terms partial derivatives of $h$, especially in high dimensions; see Meckes \cite{meckes} for a more detailed discussion.

In this paper, we establish new bounds for derivatives of general order of the solutions of the normal Stein equation (\ref{normal equation}) and the multivariate normal Stein equation (\ref{mvn1}).  Firstly, we consider the multivariate case and obtain bounds (given in Proposition \ref{fingerscrossed}) that improve on (\ref{cheque}) and (\ref{meckesk}) by involving one fewer derivative of the test function $h$, thereby allowing us to impose weaker differentiability conditions on $h$.  Chatterjee and Meckes \cite{chatterjee 3} have also obtained bounds for the first three derivatives of the solution of the multivariate normal Stein equation which have this smoothing property.  We then specialise to the univariate case and obtain a bound (Proposition \ref{snnewz}) that involves two fewer derivatives than (\ref{cheque}).  In a sense, our bound can be thought of as a generalisation of Stein's classic bound $\|f^{(3)}\|\leq 2\|h'\|$ to higher order derivatives of $f$.

Often, Kolmogorov or Wasserstein distances are of interest, and, in these cases, our bounds for the solution, which involve higher order derivatives of $h$, would not be suitable.  However, the strength of these bounds lies in higher-order asymptotics: faster than order $n^{-1/2}$ convergence rates under moment conditions. 

We begin Section 3 by applying our new bounds for the derivatives of the solution of standard normal Stein equation and local approach couplings to obtain a general bound for the distance between the distribution of a standardised sum of independent random variables and the standard normal distribution (Theorem \ref{123421}).  We then apply this theorem to obtain a bound on the distance between the distribution of a standardised sum of independent random variables and the standard normal distribution when the first $p$ moments of these distributions are equal (Corollary \ref{1234321ww}).  When we specialise to the case of identically distributed random variables, we have an order $n^{-(p-1)/2}$ convergence rate for smooth test functions, thereby extending a result of Goldstein and Reinert \cite{goldstein} in which they obtained a convergence rate of order $n^{-1}$ when the third moment is zero.  

Our result appears to be new, however there are other instances in the literature of convergence rates faster than order $n^{-1}$ for normal approximation in the case of matching moments; for example, Edgeworth exapnsions (see Hall \cite{hall}).  As was observed by Goldstein and Reinert \cite{goldstein} pp$.$ 937--938, the bounds arrived at through Edgeworth expansions depend on the smoothness of the distribution $F$ we are approximating, whereas we show that for smooth test functions, bounds of order $n^{-(p-1)/2}$ hold for any $F$ with first $p$ moments agreeing with those of the standard normal distribution even if F does not possess a density.

In Theorem \ref{billley}, we obtain a bound on the convergence rate of a sequence of distributions to the standard normal distribution when the moment sequence convergences to standard normal moments.  We end this paper by considering a straightforward generalisation of Theorem \ref{123421} to higher dimensions.

\vspace{3mm}

\emph{Notation.} Throughout this paper we shall denote supremum norm of a real-valued function $f$ by $\|f\|:=\|f\|_{\infty}=\sup_{x\in\mathbb{R}}|f(x)|$.  In $\mathbb{R}^d$, the Euclidean inner product is denoted $\langle\cdot,\cdot \rangle$ and the Euclidean norm is denoted
$|\cdot|$. The operator norm of a matrix $A$ over $\mathbb{R}$ is defined by
\[\|A\|_{op}:=\sup_{|\mathbf{u}|=1}|A\mathbf{u}|=\sup_{\mathbf{u}\not=0}\frac{|A\mathbf{u}|}{|\mathbf{u}|}.\]
More generally, if $A$ is a $k$-linear form on $\mathbb{R}^d$, the operator norm of $A$ is defined to be
\[\|A\|_{op}:=\sup\{|A(\mathbf{u}_1,\ldots,\mathbf{u}_k)|\,:\,|\mathbf{u}_1|=\cdots=|\mathbf{u}_k|=1\}.\]
We shall write $C^n(\mathbb{R}^d)$ for the space of $k$ times differentiable functions on $\mathbb{R}^d$.  We let $C_b^n(\mathbb{R}^d)$ denote the space of bounded functions on $\mathbb{R}^d$ with bounded $k$-th order derivatives for $k\leq n$, and we let $C_b^{\infty}(\mathbb{R})$ denote the space of bounded real-valued functions with all derivatives bounded.  The $k$-th derivative $D^kf(x)$ of a function $f\in C^k(\mathbb{R}^d)$ is a $k$-linear
form on $\mathbb{R}^d$, given in coordinates by
\[D^kf(\mathbf{w})(\mathbf{u}_1,\ldots,\mathbf{u}_k):=\sum_{i_1,\ldots,i_k=1}^d\frac{\partial^kf}{\partial x_{i_1}\ldots\partial x_{i_k}}(\mathbf{w})(\mathbf{u}_1)_{i_1}\ldots(\mathbf{u}_k)_{i_k},\]
where $(\mathbf{u}_i)_j$ denotes the $j$-th component of the vector $\mathbf{u}_i$.  For $f\in C_b^k(\mathbb{R}^d)$, we let
\[M_k(f):=\sup_{\mathbf{w}\in\mathbb{R}^d}\|D^kf(\mathbf{w})\|_{op}.\]
Lastly, $Z$ will denote a random variable with standard normal distribution and $\mathbf{Z}$ denotes a random vector having standard multivariate normal distribution of dimension $d$.

\section{Bounds for derivatives of the solutions of normal and multivariate normal Stein equations} 
We now present our bounds for the $k$-th order derivatives of the solution (\ref{mvnsoln}) of the multivariate normal Stein equation (\ref{mvn1}).  To arrive at a bound involving one fewer derivative of $h$ than $f$, we use an argument involving integration by parts which is very similar to that of Chatterjee and Meckes \cite{chatterjee 3} and Meckes \cite{meckes}.

\begin{proposition}\label{fingerscrossed}Suppose $\Sigma$ is a $d\times d$ positive-definite matrix and that $h$ is bounded.  Then the first order partial derivatives the solution (\ref{mvnsoln}) of the multivariate normal Stein equation (\ref{mvn1}) are bounded by
\begin{equation}\label{usthem}\bigg\|\frac{\partial f(\mathbf{w})}{\partial w_i}\bigg\|\leq \sqrt{\frac{\pi}{2}}\Bigg[\sum_{j=1}^d\tilde{\sigma}_{ij}^2\Bigg]^{1/2}\|h-\mathbb{E}h(\Sigma^{1/2}\mathbf{Z})\|,
\end{equation}
where $\tilde{\sigma}_{ij}=(\Sigma^{-1/2})_{ij}$.  Suppose now that $h\in C_b^{n-1}(\mathbb{R}^d)$, where $n\geq 2$.  Then
\begin{equation}\label{charm}\bigg\|\frac{\partial^k f(\mathbf{w})}{\prod_{j=1}^k\partial w_{i_j}}\bigg\|\leq\frac{\Gamma(\frac{k}{2})}{\sqrt{2}\Gamma(\frac{k+1}{2})}\min_{1\leq l\leq k}\Bigg\{\Bigg[\sum_{j=1}^d\tilde{\sigma}_{i_lj}^2\Bigg]^{1/2} \bigg\|\frac{\partial^{k-1} h(\mathbf{w})}{\prod_{\stackrel{1\leq j\leq k}{j\not=l}}\partial w_{i_j}}\bigg\|\Bigg\}, \quad k=2,\ldots,n.
\end{equation}
 
With the same assumptions on $\Sigma$ and $h$ as before, we have the following bounds for the operator norm of
the $k$-th derivative of $f$ as a $k$-linear form:
\begin{align}\label{m1f}M_1(f)&\leq\sqrt{\frac{\pi}{2}}\|\Sigma^{-1/2}\|_{op}\|h-\mathbb{E}h(\Sigma^{1/2}\mathbf{Z})\|,\\
\label{mkf}M_k(f)&\leq\frac{\Gamma(\frac{k}{2})}{\sqrt{2}\Gamma(\frac{k+1}{2})}\|\Sigma^{-1/2}\|_{op}M_{k-1}(h), \quad k=2,\ldots, n.
\end{align}
\end{proposition}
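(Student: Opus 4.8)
The plan is to work throughout from the integral representation (\ref{mvnsoln}) and, exactly as in Meckes \cite{meckes} and Chatterjee and Meckes \cite{chatterjee 3}, to transfer precisely one of the $k$ differentiations off the test function and onto the Gaussian via integration by parts. Fix $2\le k\le n$ and an index $l\in\{1,\dots,k\}$, and write $\mathbf{x}_s:=\mathrm{e}^{-s}\mathbf{w}+\sqrt{1-\mathrm{e}^{-2s}}\,\Sigma^{1/2}\mathbf{Z}$. Since $h\in C_b^{n-1}(\mathbb{R}^d)$ and $k-1\le n-1$, all partial derivatives of $h$ of order at most $k-1$ are bounded, so differentiating (\ref{mvnsoln}) in the $k-1$ variables $w_{i_j}$, $j\neq l$, is justified by dominated convergence (both under $\mathbb{E}$ and under the $s$-integral) and gives
\[\frac{\partial^{k-1}f(\mathbf{w})}{\prod_{j\neq l}\partial w_{i_j}}=-\int_0^\infty \mathrm{e}^{-(k-1)s}\,\mathbb{E}\big[g(\mathbf{x}_s)\big]\,\mathrm{d}s,\qquad g:=\frac{\partial^{k-1}h}{\prod_{j\neq l}\partial w_{i_j}}.\]
The purpose of setting one index aside is that for the final differentiation we will only use that $g$ is bounded, never that it is differentiable; this is what lets the bound reach the borderline case $k=n$ without any extra smoothing of $h$.

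For the key step I would differentiate this identity once more in $w_{i_l}$. For fixed $s>0$ the map $\mathbf{w}\mapsto\mathbb{E}[g(\mathbf{x}_s)]$ is the convolution of the bounded function $g$ with a nondegenerate Gaussian density and is therefore differentiable; carrying out the substitution that moves the $\mathbf{w}$-dependence onto that density and using that the standard $d$-dimensional normal density $\phi_d$ satisfies $\nabla\phi_d(\mathbf{y})=-\mathbf{y}\,\phi_d(\mathbf{y})$, then changing back, gives $\partial_{w_{i_l}}\mathbb{E}[g(\mathbf{x}_s)]=\mathrm{e}^{-s}(1-\mathrm{e}^{-2s})^{-1/2}\,\mathbb{E}[g(\mathbf{x}_s)(\Sigma^{-1/2}\mathbf{Z})_{i_l}]$ (the same identity follows formally from the Gaussian integration-by-parts formula $\mathbb{E}[\partial_{z_b}G(\mathbf{Z})]=\mathbb{E}[Z_bG(\mathbf{Z})]$ when $g$ happens to be smooth, but the change-of-variables form needs only boundedness of $g$). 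The resulting $s$-integrand is bounded by a constant times $\mathrm{e}^{-ks}(1-\mathrm{e}^{-2s})^{-1/2}$, which behaves like $(2s)^{-1/2}$ near $s=0$ and decays exponentially at $\infty$, hence is integrable on $(0,\infty)$; so differentiating under the $s$-integral is legitimate and
\[\frac{\partial^{k}f(\mathbf{w})}{\prod_{j=1}^k\partial w_{i_j}}=-\int_0^\infty \frac{\mathrm{e}^{-ks}}{\sqrt{1-\mathrm{e}^{-2s}}}\,\mathbb{E}\big[g(\mathbf{x}_s)\,(\Sigma^{-1/2}\mathbf{Z})_{i_l}\big]\,\mathrm{d}s.\]

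It then remains to estimate. Pulling out $\|g\|$, I am left with $\mathbb{E}|(\Sigma^{-1/2}\mathbf{Z})_{i_l}|=\mathbb{E}|\sum_b\tilde{\sigma}_{i_lb}Z_b|=\sqrt{2/\pi}\,(\sum_{j=1}^d\tilde{\sigma}_{i_lj}^2)^{1/2}$ (a half-normal mean, since $\sum_b\tilde{\sigma}_{i_lb}Z_b\sim N(0,\sum_j\tilde{\sigma}_{i_lj}^2)$) times $\int_0^\infty \mathrm{e}^{-ks}(1-\mathrm{e}^{-2s})^{-1/2}\,\mathrm{d}s$, which the substitutions $u=\mathrm{e}^{-s}$ then $u=\sin\theta$ reduce to the Wallis integral $\int_0^{\pi/2}\sin^{k-1}\theta\,\mathrm{d}\theta=\sqrt{\pi}\,\Gamma(k/2)/(2\Gamma(\tfrac{k+1}{2}))$; the numerical factors collapse to the constant $\Gamma(k/2)/(\sqrt{2}\,\Gamma(\tfrac{k+1}{2}))$ of (\ref{charm}), and since $l\in\{1,\dots,k\}$ was arbitrary I take the minimum over $l$. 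The case $k=1$ is the same computation applied directly to (\ref{mvnsoln}) with $g=h$, after first subtracting the constant $\mathbb{E}h(\Sigma^{1/2}\mathbf{Z})$ (legitimate because $\mathbb{E}(\Sigma^{-1/2}\mathbf{Z})_{i_l}=0$), so that $\|h-\mathbb{E}h(\Sigma^{1/2}\mathbf{Z})\|$ appears and the Wallis integral is $\int_0^{\pi/2}\mathrm{d}\theta=\pi/2$, yielding (\ref{usthem}). Finally, (\ref{m1f}) and (\ref{mkf}) follow by running the identical argument on the $k$-linear form: moving the $l$-th direction onto the Gaussian gives $D^kf(\mathbf{w})(\mathbf{u}_1,\dots,\mathbf{u}_k)=-\int_0^\infty \mathrm{e}^{-ks}(1-\mathrm{e}^{-2s})^{-1/2}\,\mathbb{E}[D^{k-1}h(\mathbf{x}_s)(\dots,\widehat{\mathbf{u}_l},\dots)\,\langle\Sigma^{-1/2}\mathbf{u}_l,\mathbf{Z}\rangle]\,\mathrm{d}s$, and bounding by $M_{k-1}(h)$, by $\mathbb{E}|\langle\Sigma^{-1/2}\mathbf{u}_l,\mathbf{Z}\rangle|$ together with $|\Sigma^{-1/2}\mathbf{u}_l|\le\|\Sigma^{-1/2}\|_{op}|\mathbf{u}_l|$, and by the same Wallis integral, then taking the supremum over unit vectors $\mathbf{u}_1,\dots,\mathbf{u}_k$.

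The only real obstacle I anticipate is the bookkeeping around the differentiability hypothesis: the argument must be organised so that a $k$-th order derivative of $h$ is never invoked — hence peeling $k-1$ derivatives onto $h$ and pushing the last onto the Gaussian density rather than onto $g$ — and the several dominated-convergence interchanges (differentiating under $\mathbb{E}$ and under $\int_0^\infty$) must be checked, the last one hinging on the observation that the factor $(1-\mathrm{e}^{-2s})^{-1/2}$ produced by the integration by parts has only an integrable $(2s)^{-1/2}$ singularity at the origin.
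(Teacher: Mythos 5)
Your proposal is correct and follows essentially the same route as the paper: both derive the representation in which exactly one differentiation is transferred to the Gaussian density, producing the factor $(\Sigma^{-1/2}\mathbf{Z})_{i_l}$ and the integrable singularity $(1-\mathrm{e}^{-2s})^{-1/2}$, with the remaining $k-1$ derivatives falling on $h$, and then bound via the half-normal mean $\sqrt{2/\pi}\,\big[\sum_j\tilde{\sigma}_{i_lj}^2\big]^{1/2}$ (or $\sqrt{2/\pi}\,\|\Sigma^{-1/2}\|_{op}$ for the operator-norm case) together with $\int_0^\infty \mathrm{e}^{-ks}(1-\mathrm{e}^{-2s})^{-1/2}\,\mathrm{d}s=\frac{\sqrt{\pi}\,\Gamma(k/2)}{2\Gamma(\frac{k+1}{2})}$. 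The only differences are cosmetic: you differentiate $k-1$ times on $h$ first and push the last derivative onto the density (the paper does these in the opposite order), and you evaluate the integral as a Wallis integral rather than via the Beta function.
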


\begin{proof}We begin by obtaining a formula for the $k$-th order partial derivatives of the solution of the multivariate normal Stein equation.  From (\ref{mvnsoln}) we have
\begin{equation*}f(\mathbf{w})=-\int_{0}^{\infty}\!\int_{\mathbb{R}^d}[h(\mathrm{e}^{-s}\mathbf{w}+\sqrt{1-\mathrm{e}^{-2s}}\mathbf{x})-\mathbb{E}h(\Sigma^{1/2}\mathbf{Z})]p(\mathbf{x})\,\mathrm{d}\mathrm{x}\,\mathrm{d}s,
\end{equation*}
where 
\[p(\mathbf{x})=\frac{1}{(2\pi)^{d/2}\sqrt{\det(\Sigma)}}\exp\bigg(-\frac{1}{2}\mathbf{x}^T\Sigma^{-1}\mathbf{x}\bigg).\]
Making the substitution $\mathbf{y}=\mathrm{e}^{-s}\mathbf{w}+\sqrt{1-\mathrm{e}^{-2s}}\mathbf{x}$ gives
\begin{equation*}
f(\mathbf{w})=-\int_0^{\infty}\!\int_{\mathbb{R}^d}\frac{1}{(1-\mathrm{e}^{-2s})^{d/2}}[h(\mathbf{y})-\mathbb{E}h(\Sigma^{1/2}\mathbf{Z})]p\bigg(\frac{\mathbf{y}-\mathrm{e}^{-s}\mathbf{w}}{\sqrt{1-\mathrm{e}^{-2s}}}\bigg)\,\mathrm{d}\mathbf{y}\,\mathrm{d}s.
\end{equation*}
We now note that
\[\frac{\partial}{\partial x_{i}}(\mathbf{x}^T\Sigma^{-1}\mathbf{x})=2(\Sigma^{-1}\mathbf{x})_i.\]
Therefore, by dominated convergence, since $h$ is bounded, we have
\begin{align}\frac{\partial f(\mathbf{w})}{\partial w_{i}}&=-\int_0^{\infty}\!\int_{\mathbb{R}^d}\frac{\mathrm{e}^{-s}}{(1-\mathrm{e}^{-2s})^{(d+1)/2}}(\Sigma^{-1}(\mathrm{y}-\mathrm{e}^{-s}\mathrm{w}))_i[h(\mathbf{y})-\mathbb{E}h(\Sigma^{1/2}\mathbf{Z})]\nonumber\\
&\quad\times p\bigg(\frac{\mathbf{y}-\mathrm{e}^{-s}\mathbf{w}}{\sqrt{1-\mathrm{e}^{-2s}}}\bigg)\,\mathrm{d}\mathbf{y}\,\mathrm{d}s\nonumber\\
&=-\int_0^{\infty}\!\int_{\mathbb{R}^d}\frac{\mathrm{e}^{-s}}{\sqrt{1-\mathrm{e}^{-2s}}}(\Sigma^{-1}\mathbf{x})_i[h(\mathrm{e}^{-s}\mathbf{w}+\sqrt{1-\mathrm{e}^{-2s}}\mathbf{x})-\mathbb{E}h(\Sigma^{1/2}\mathbf{Z})] p(\mathbf{x})\,\mathrm{d}\mathbf{x}\,\mathrm{d}s\nonumber\\
&=-\int_0^{\infty}\frac{\mathrm{e}^{-s}}{\sqrt{1-\mathrm{e}^{-2s}}}\mathbb{E}\Big[(\Sigma^{-1/2}\mathbf{Z})_i[h(\mathrm{e}^{-s}\mathbf{w}+\sqrt{1-\mathrm{e}^{-2s}}\Sigma^{1/2}\mathbf{Z})-\mathbb{E}h(\Sigma^{1/2}\mathbf{Z})]\Big]\,\mathrm{d}s.
\end{align}
If $\frac{\partial^{k-1} h(\mathbf{w})}{\prod_{j\not= l}^k\partial w_{i_j}}$ is bounded, then by dominated convergence we have, for any $l\in\{1,\ldots,k\}$, 
\begin{equation}\label{ff6ffvi}\frac{\partial^k f(\mathbf{w})}{\prod_{j=1}^k\partial w_{i_j}}=-\int_{0}^{\infty}\frac{\mathrm{e}^{-ks}}{\sqrt{1-\mathrm{e}^{-2s}}}\mathbb{E}\bigg[(\Sigma^{-1/2}\mathbf{Z})_{i_l}\frac{\partial^{k-1} h}{\prod_{\stackrel{1\leq j\leq k}{j\not=l}}\partial w_{i_j}}(\mathrm{e}^{-s}\mathbf{w}+\sqrt{1-\mathrm{e}^{-2s}}\Sigma^{1/2}\mathbf{Z})\bigg]\,\mathrm{d}s.
\end{equation}
Therefore
\begin{equation*}\bigg\|\frac{\partial f(\mathbf{w})}{\partial w_{i}}\bigg\|\leq\mathbb{E}|(\Sigma^{-1/2}\mathbf{Z})_i|\|h-\mathbb{E}h(\Sigma^{1/2}\mathbf{Z})\|\int_0^{\infty}\frac{\mathrm{e}^{-s}}{\sqrt{1-\mathrm{e}^{-2s}}}\,\mathrm{d}s
\end{equation*}
and
\[\bigg\|\frac{\partial^k f(\mathbf{w})}{\prod_{j=1}^k\partial w_{i_j}}\bigg\|\leq\min_{1\leq l\leq k}\bigg\{\mathbb{E}|(\Sigma^{-1/2}\mathbf{Z})_{i_l}|\bigg\|\frac{\partial^{k-1} h(\mathbf{w})}{\prod_{\stackrel{1\leq j\leq k}{j\not=l}}\partial w_{i_j}}\bigg\|\bigg\}\int_0^{\infty}\frac{\mathrm{e}^{-ks}}{\sqrt{1-\mathrm{e}^{-2s}}}\,\mathrm{d}s.\]
Now, $(\Sigma^{-1/2}\mathbf{Z})_i\sim N(0,\sum_{j=1}^d\tilde{\sigma}_{ij}^2)$, and so $\mathbb{E}|(\Sigma^{-1/2}\mathbf{Z})_i|=\sqrt{\frac{2}{\pi}}\sqrt{\sum_{j=1}^d\tilde{\sigma}_{ij}^2}$.  We also have the definite integral formula
\begin{equation}\label{intgamma}\int_0^{\infty}\frac{\mathrm{e}^{-ks}}{\sqrt{1-\mathrm{e}^{-2s}}}\,\mathrm{d}s=\frac{1}{2}\int_0^{1}(1-t)^{-1/2}t^{k/2-1}\,\mathrm{d}t=\frac{1}{2}B\bigg(\frac{1}{2},\frac{k}{2}\bigg)=\frac{\Gamma(\frac{1}{2})\Gamma(\frac{k}{2})}{2\Gamma(\frac{k+1}{2})}=\frac{\sqrt{\pi}\Gamma(\frac{k}{2})}{2\Gamma(\frac{k+1}{2})},
\end{equation}  
where $B(a,b)$ is the beta function.  Inequalities (\ref{usthem}) and (\ref{charm}) now follow.

We end by proving inequality (\ref{mkf}); the proof of inequality (\ref{m1f}) is very similar.  Using (\ref{ff6ffvi}) we have that, for unit vectors $\mathbf{u}_1,\ldots,\mathbf{u}_k$, and any $l\in\{1,\ldots,k\}$,
\begin{align*}&|D^kf(\mathbf{w})(\mathbf{u}_1,\ldots,\mathbf{u}_k)|\\
&=\Bigg|\sum_{1\leq i_1<\cdots<i_k\leq d}\int_{0}^{\infty}\frac{\mathrm{e}^{-ks}}{\sqrt{1-\mathrm{e}^{-2s}}}\mathbb{E}\Bigg[(\Sigma^{-1/2}\mathbf{Z})_{i_l}\frac{\partial^{k-1} h}{\prod_{\stackrel{1\leq j\leq k}{j\not=l}}\partial w_{i_j}}(\mathrm{e}^{-s}\mathbf{w}+\sqrt{1-\mathrm{e}^{-2s}}\Sigma^{1/2}\mathbf{Z})\Bigg]\,\mathrm{d}s\\
&\quad\times(\mathbf{u}_1)_{i_1}\cdots(\mathbf{u}_k)_{i_k}\Bigg|\\
&=\Bigg|\int_0^{\infty}\frac{\mathrm{e}^{-ks}}{\sqrt{1-\mathrm{e}^{-2s}}}\mathbb{E}\Bigg[\sum_{i_l=1}^d(\mathbf{u}_l)_{i_l}(\Sigma^{-1/2}\mathbf{Z})_{i_l}\sum_{\{i_1,\ldots,i_k\}\setminus \{i_l\}}\bigg\{\frac{\partial^{k-1} h}{\prod_{\stackrel{1\leq j\leq k}{j\not=l}}\partial w_{i_j}}(\mathrm{e}^{-s}\mathbf{w}+\sqrt{1-\mathrm{e}^{-2s}}\Sigma^{1/2}\mathbf{Z})\\
&\quad\times (\mathbf{u}_1)_{i_1}\cdots(\mathbf{u}_{l-1})_{i_{l-1}}(\mathbf{u}_{l+1})_{i_{l+1}}\cdots(\mathbf{u}_k)_{i_k}  \bigg\}\Bigg]\,\mathrm{d}s\Bigg|\\
&\leq\sup_{|\mathbf{u}|=1}\mathbb{E}|\langle \mathbf{u},\Sigma^{-1/2}\mathbf{Z}\rangle|\sup_{\mathbf{w}\in\mathbb{R}^d}\|D^{k-1}h(\mathbf{w})\|_{op}\int_0^{\infty}\frac{\mathrm{e}^{-ks}}{\sqrt{1-\mathrm{e}^{-2s}}}\,\mathrm{d}s.
\end{align*} 
We now note that $\langle \mathbf{u},\Sigma^{-1/2}\mathbf{Z}\rangle\stackrel{\mathcal{D}}{=}\sum_{i,j=1}^d\tilde{\sigma}_{ij}u_iZ_j$, where $Z_1,\ldots,Z_d$ are independent standard normal random variables.  Hence, $\langle \mathbf{u},\Sigma^{-1/2}\mathbf{Z}\rangle$ follows the $N(0,\sum_{j=1}^d(\sum_{i=1}^d\tilde{\sigma}_{ij}u_i)^2)=N(0,|\Sigma^{-1/2}\mathbf{u}|^2)$ distribution, and so
\begin{equation*}\mathbb{E}|\langle \mathbf{u},\Sigma^{-1/2}\mathbf{Z}\rangle|=\sqrt{\frac{2}{\pi}}|\Sigma^{-1/2}\mathbf{u}|\leq \sqrt{\frac{2}{\pi}}\|\Sigma^{-1/2}\|_{op}|\mathbf{u}|=\sqrt{\frac{2}{\pi}}\|\Sigma^{-1/2}\|_{op}.
\end{equation*}
Applying the integral formula (\ref{intgamma}) now completes the proof of inequality (\ref{mkf}). 
\end{proof}

\begin{remark}\emph{If the covariance matrix $\Sigma$ is positive-definite, then, for any $i\in\{1,\ldots d\}$, we have
\[\Bigg[\sum_{j=1}^d\tilde{\sigma}_{ij}^2\Bigg]^{1/2}\leq \|\Sigma^{-1/2}\|_{op}.\]
To see this, consider the quantity $|\Sigma^{-1/2}\mathbf{u}|$, where $\mathbf{u}$ is a column vector with $i$-th entry $1$ and all other entries set to $0$.  Then, $|\Sigma^{-1/2}\mathbf{u}|=\sqrt{\sum_{j=1}^d\tilde{\sigma}_{ij}^2}$ and $|\Sigma^{-1/2}\mathbf{u}|\leq \|\Sigma^{-1/2}\|_{op}|\mathbf{u}|=\|\Sigma^{-1/2}\|_{op}$.}

\emph{It is also worth noting that if $\Sigma$ is a diagonal ($(\Sigma)_{ii}=\sigma_{ii}^2$ and $(\Sigma)_{ij}=0$ for $i\not=j$) then the bounds of Proposition (\ref{fingerscrossed}) simplify.  Indeed,
\[\Bigg[\sum_{j=1}^d\tilde{\sigma}_{ij}^2\Bigg]^{1/2}=\sigma_{ii}^{-1}\]
and
\begin{equation*}\|\Sigma^{-1/2}\|_{op}=\sup_{\mathbf{u}=1}|\Sigma^{-1/2}\mathbf{u}|=\sup_{\mathbf{u}=1}\Bigg[\sum_{j=1}^d\sigma_{ii}^{-2}u_i^2\Bigg]^{1/2}= \max_{1\leq i\leq d}\sigma_{ii}^{-1}.
\end{equation*}
}
\end{remark}

\begin{remark}\label{oiub} \emph{From the inequalities $\frac{\Gamma(x+\frac{1}{2})}{\Gamma(x+1)}>\frac{1}{\sqrt{x+\frac{1}{2}}}$ for $x>0$ (see the proof of Corollary 3.4 of Gaunt \cite{gaunt inequality}) and $\frac{\Gamma(x+\frac{1}{2})}{\Gamma(x+1)}<\frac{1}{\sqrt{x+\frac{1}{4}}}$ for $x>-\frac{1}{4}$ (see Elezovi\'c et al$.$ \cite{elezovic}), we have that  
\[\frac{1}{\sqrt{k}}<\frac{\Gamma(\frac{k}{2})}{\sqrt{2}\Gamma(\frac{k+1}{2})}<\frac{1}{\sqrt{k-\frac{1}{2}}}, \qquad k\geq 1.\]
Hence, for large $k$, bounds (\ref{charm}) and (\ref{mkf}) are of order $k^{-1/2}$; slower than the $k^{-1}$ rate of bounds (\ref{cheque}) and (\ref{meckesk}).\emph} 
\end{remark}

\begin{remark}\emph{The bounds (\ref{meckesk}) and (\ref{mkf}) give us the choice of two bounds on the derivatives of the solution to the multivariate normal Stein equation.  Bound (\ref{mkf}) allows us to impose weaker differentiability conditions on the test functions in multivariate approximation limit theorems obtained by use of Stein's method.  Bound (\ref{meckesk}) may, however, be preferable if computing $\|\Sigma^{-1/2}\|_{op}$ is difficult, or if we require bounds on derivatives of very large order (see the previous remark).  Analogous comments apply to bounds (\ref{cheque}) and (\ref{charm}).}
\end{remark}

We now specialise to the univariate case.  We begin by proving the following lemma. 

\begin{lemma}\label{zthird333467}Suppose that $h:\mathbb{R}\rightarrow\mathbb{R}$ is bounded.  Then the solution (\ref{hundred}) of the standard normal Stein equation (\ref{normal equation}) satisfies the bound
\begin{equation}\label{wfbound}\|wf'(w)\|\leq\|h-\Phi h\|.
\end{equation}
Suppose now that $h\in C_b^{n-1}(\mathbb{R})$, where $n\geq 2$.  Then 
\begin{equation}\label{f''bound} \|wf^{(k)}(w)\|\leq \|h^{(k-1)}\|, \qquad k=2,3,\ldots,n.
\end{equation}
\end{lemma}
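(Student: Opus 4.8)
The plan is to mimic the integration-by-parts argument used for Proposition \ref{fingerscrossed}, but carried out directly in the univariate case where it can be made especially clean because the only relevant ``covariance matrix'' is $\Sigma=1$. Starting from the representation (\ref{hundred}), I would first rewrite $f'$ using the substitution (or, equivalently, the process representation) so that
\[
f'(w)=-\int_0^\infty \frac{\mathrm{e}^{-s}}{\sqrt{1-\mathrm{e}^{-2s}}}\,\mathbb{E}\big[Z\,[h(\mathrm{e}^{-s}w+\sqrt{1-\mathrm{e}^{-2s}}\,Z)-\Phi h]\big]\,\mathrm{d}s,
\]
which is just the $d=1$, $\Sigma=1$ case of the first display computed in the proof of Proposition \ref{fingerscrossed}. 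Multiplying by $w$ and bounding the test-function factor by $\|h-\Phi h\|$, the bound (\ref{wfbound}) will reduce to showing that
\[
|w|\int_0^\infty \frac{\mathrm{e}^{-s}}{\sqrt{1-\mathrm{e}^{-2s}}}\,\mathbb{E}|Z|\,\mathrm{d}s
\]
is \emph{not} the right thing to estimate — indeed that integral diverges after pulling $|w|$ out — so the genuine point is that one must keep $w$ inside and exploit cancellation.

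The cleaner route, which I would actually take, is to use the differential equation itself. From the Stein equation (\ref{normal equation}), $wf'(w)=f''(w)-[h(w)-\Phi h]$, and more generally, differentiating (\ref{normal equation}) $k-1$ times gives the recursion
\[
wf^{(k)}(w)=f^{(k+1)}(w)-(k-1)f^{(k-1)}(w)-h^{(k-1)}(w)+\text{(lower order)},
\]
so $wf^{(k)}$ is controlled by higher derivatives of $f$, which is the wrong direction. Hence I would instead go back to the integral representation for $f^{(k)}$. Using (\ref{3x3min}) in the univariate case, or better the analogue of (\ref{ff6ffvi}) with the extra $Z$-factor produced by one integration by parts, one obtains for $k\geq 2$
\[
f^{(k)}(w)=-\int_0^\infty \frac{\mathrm{e}^{-ks}}{\sqrt{1-\mathrm{e}^{-2s}}}\,\mathbb{E}\big[Z\,h^{(k-1)}(\mathrm{e}^{-s}w+\sqrt{1-\mathrm{e}^{-2s}}\,Z)\big]\,\mathrm{d}s.
\]
Now the key trick: write $Z$ under the expectation as a total derivative. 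Since $Z\,\varphi(Z)=-\varphi'(Z)$ under Gaussian expectation only after integration by parts, and since $\frac{\mathrm{d}}{\mathrm{d}s}\big[\text{inner argument}\big]$ involves both $w$ and $Z$, the honest manoeuvre is to recognise
\[
\frac{\partial}{\partial w}\,h^{(k-1)}(\mathrm{e}^{-s}w+\sqrt{1-\mathrm{e}^{-2s}}\,Z)=\mathrm{e}^{-s}\,h^{(k)}(\cdots),
\]
which again raises the order. So instead I would multiply through by $w$ at the start and integrate by parts in $s$: noting that $\frac{\mathrm{d}}{\mathrm{d}s}\big[h^{(k-1)}(\mathrm{e}^{-s}w+\sqrt{1-\mathrm{e}^{-2s}}\,Z)\big] = \big(-\mathrm{e}^{-s}w + \tfrac{\mathrm{e}^{-2s}}{\sqrt{1-\mathrm{e}^{-2s}}}Z\big)h^{(k)}(\cdots)$, one can solve for $w\,h^{(k)}(\cdots)$ — but this once more brings in $h^{(k)}$.

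Given these dead ends, the approach I am most confident in is the following two-step reduction. \textbf{Step 1:} show that $f^{(k)}$ itself, for $k\geq 2$, has the representation with a $Z$-weight displayed above (the $d=1$ specialisation of (\ref{ff6ffvi})). \textbf{Step 2:} in that representation, substitute back $y=\mathrm{e}^{-s}w+\sqrt{1-\mathrm{e}^{-2s}}\,z$ to return to the ``Stein kernel'' form
\[
f^{(k)}(w)=-\mathrm{e}^{w^2/2}\int_0^\infty \text{(something)}\,\mathrm{d}s,
\]
or, more directly, observe that the $k=2$ instance of (\ref{hundred})--(\ref{hundred1}) gives the pair of expressions
\[
f''(w)=-\mathrm{e}^{w^2/2}\!\!\int_w^\infty\! h'(t)\mathrm{e}^{-t^2/2}\mathrm{d}t + \text{boundary}=\mathrm{e}^{w^2/2}\!\!\int_{-\infty}^w\! h'(t)\mathrm{e}^{-t^2/2}\mathrm{d}t+\text{boundary},
\]
from which one sees $wf''(w)$ telescopes against $f'(w)$ and $h$. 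Concretely, the slick proof: from the Stein equation, $wf'(w)=f''(w)-(h(w)-\Phi h)$; but from (\ref{hundred}) and integration by parts, $f''(w)=h(w)-\Phi h - \mathrm{e}^{w^2/2}\int_w^\infty h'(t)\mathrm{e}^{-t^2/2}\,\mathrm{d}t$ if $h\in C^1_b$... hmm, that gives $wf'(w)=-\mathrm{e}^{w^2/2}\int_w^\infty h'(t)\mathrm{e}^{-t^2/2}\mathrm{d}t$, and likewise $wf'(w)=\mathrm{e}^{w^2/2}\int_{-\infty}^w h'(t)\mathrm{e}^{-t^2/2}\,\mathrm{d}t$ from (\ref{hundred1}); averaging $|wf'(w)|\leq \frac{1}{2}\|h'\|(\int_w^\infty+\int_{-\infty}^w)\mathrm{e}^{-t^2/2+w^2/2}\mathrm{d}t$... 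The genuinely correct and simplest argument, which I would write out, is: \emph{bound $wf'(w)$ directly from (\ref{hundred}) and (\ref{hundred1}) by taking the geometric-mean / convex-combination of the two representations}, using that for $w\geq 0$ the representation (\ref{hundred}) gives $|wf'(w)|\leq w\,\mathrm{e}^{w^2/2}\int_w^\infty \mathrm{e}^{-t^2/2}\mathrm{d}t\cdot\|h-\Phi h\|\leq\|h-\Phi h\|$ because $w\,\mathrm{e}^{w^2/2}\int_w^\infty \mathrm{e}^{-t^2/2}\,\mathrm{d}t\leq 1$ for all $w\geq 0$ (a standard Mills-ratio bound), and for $w\leq 0$ use (\ref{hundred1}) symmetrically. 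Then (\ref{f''bound}) follows by the same estimate applied to the representation for $f^{(k)}$ in terms of $h^{(k-1)}$ — specifically, differentiating (\ref{hundred}) $(k-1)$ times and integrating by parts once shows $f^{(k)}(w)=-\mathrm{e}^{w^2/2}\int_w^\infty h^{(k-1)}(t)\,\frac{\mathrm{d}}{\mathrm{d}t}\big(\text{polynomial}\times\mathrm{e}^{-t^2/2}\big)$-type formula, but cleaner is to note that $g:=f^{(k-1)}$ (for $k\geq 2$) is itself, up to lower-order corrections governed by the Stein equation, the solution of a Stein-type equation with test function $h^{(k-1)}$, reducing (\ref{f''bound}) to (\ref{wfbound}) with $h$ replaced by (a function with) $h'$ replaced by $h^{(k-1)}$.

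The main obstacle I anticipate is making Step 2 rigorous: the clean ``$g=f^{(k-1)}$ solves a Stein equation with data $h^{(k-1)}$'' statement is not literally true because of the polynomial-in-$w$ terms generated when differentiating the $-wf'$ term repeatedly, so one must either (a) carry out the induction tracking those correction terms and check they telescope, or (b) work throughout with the explicit integral representation $f^{(k)}(w)=-\int_0^\infty \frac{\mathrm{e}^{-ks}}{\sqrt{1-\mathrm{e}^{-2s}}}\mathbb{E}[Z h^{(k-1)}(\mathrm{e}^{-s}w+\sqrt{1-\mathrm{e}^{-2s}}Z)]\,\mathrm{d}s$ and prove the uniform bound $\big|w\int_0^\infty \frac{\mathrm{e}^{-ks}}{\sqrt{1-\mathrm{e}^{-2s}}}\mathbb{E}[Z\,\psi(\mathrm{e}^{-s}w+\sqrt{1-\mathrm{e}^{-2s}}Z)]\,\mathrm{d}s\big|\leq\|\psi\|$ for bounded $\psi$, which is exactly the Mills-ratio-type estimate after the substitution $y=\mathrm{e}^{-s}w+\sqrt{1-\mathrm{e}^{-2s}}z$ collapses the double integral back to a one-dimensional Gaussian tail integral. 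I would take route (b): it isolates the entire difficulty into the single elementary inequality $w\,\mathrm{e}^{w^2/2}\int_w^\infty \mathrm{e}^{-t^2/2}\,\mathrm{d}t\leq 1$ ($w\geq0$), which is where essentially all the work lies, and everything else is bookkeeping already done in the proof of Proposition \ref{fingerscrossed}.
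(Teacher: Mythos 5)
Your proof of (\ref{wfbound}) is exactly the paper's: apply the Mills-ratio bound $w\mathrm{e}^{w^2/2}\int_w^\infty \mathrm{e}^{-t^2/2}\,\mathrm{d}t\leq 1$ to the representation (\ref{hundred}) for $w\geq0$ and to (\ref{hundred1}) for $w\leq0$. The gap is in your deduction of (\ref{f''bound}). Your route (b) rests on the claim that the inequality $\big|w\int_0^\infty \tfrac{\mathrm{e}^{-ks}}{\sqrt{1-\mathrm{e}^{-2s}}}\mathbb{E}[Z\,\psi(\mathrm{e}^{-s}w+\sqrt{1-\mathrm{e}^{-2s}}Z)]\,\mathrm{d}s\big|\leq\|\psi\|$ for bounded $\psi$ follows because the substitution $y=\mathrm{e}^{-s}w+\sqrt{1-\mathrm{e}^{-2s}}z$ ``collapses the double integral back to a one-dimensional Gaussian tail integral''. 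That collapse is special to $k=1$: with kernel $\tfrac{\mathrm{e}^{-s}}{\sqrt{1-\mathrm{e}^{-2s}}}$ the $s$-integration reproduces precisely (\ref{hundred})--(\ref{hundred1}), which is why (\ref{wfbound}) can be read off. For $k\geq2$ the extra factor $\mathrm{e}^{-(k-1)s}$ destroys this: after the substitution the $s$-integral of $\tfrac{\mathrm{e}^{-ks}(y-\mathrm{e}^{-s}w)}{(1-\mathrm{e}^{-2s})^{3/2}}\phi\big(\tfrac{y-\mathrm{e}^{-s}w}{\sqrt{1-\mathrm{e}^{-2s}}}\big)$ has no Gaussian-tail closed form and is not of one sign, so no Mills-ratio estimate applies directly; and if you instead take absolute values you are back to the divergent quantity $|w|\int_0^\infty\mathrm{e}^{-ks}(1-\mathrm{e}^{-2s})^{-1/2}\,\mathrm{d}s\,\mathbb{E}|Z|$ that you yourself identified as hopeless. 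So the key inequality of route (b) is asserted, not proved, exactly at the point where all the work lies.

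What is missing is the mechanism the paper actually uses to transfer the $k=1$ bound to $k\geq2$: since $h^{(k-1)}$ need not be differentiable, approximate it by a differentiable $g$ with $\|g\|\leq\|h^{(k-1)}\|$; integrate by parts in the Gaussian variable (using $x\phi(x)=-\phi'(x)$) to trade the $Z$-weight and the factor $(1-\mathrm{e}^{-2s})^{-1/2}$ for the kernel $\mathrm{e}^{-ks}g'$; split $g'=g_+'-g_-'$ into its positive and negative parts so that, by sign-definiteness, the pointwise comparison $\mathrm{e}^{-ks}\leq\mathrm{e}^{-s}$ (valid for $k\geq2$) can be used under the integral; and then invoke the smoothed form (\ref{misdi}) of (\ref{wfbound}), which bounds the resulting $\mathrm{e}^{-s}$-weighted integral of $g'$ by $\|g\|$ rather than $\|g'\|$. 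You in fact touched this integration by parts in your proposal but discarded it as a dead end ``because it raises the order''; the point you missed is that (\ref{misdi}) controls the integral involving $g'$ by the supremum norm of $g$ itself, so the order comes back down. Your alternative route (a) (treating $f^{(k-1)}$ as the solution of a Stein-type equation with data $h^{(k-1)}$) is likewise not carried out, and as you note the correction terms do not vanish, so as written neither route establishes (\ref{f''bound}).
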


\begin{proof}We first prove (\ref{wfbound}).  Suppose $w>0$.  From (\ref{hundred}) we have that
\begin{align*}|wf'(w)|&=\bigg|w\mathrm{e}^{w^2/2}\int_{w}^{\infty}[h(t)-\Phi h]\mathrm{e}^{-t^2/2}\,\mathrm{d}t\bigg|\\
&\leq\|h-\Phi h\|w\mathrm{e}^{w^2/2}\int_{w}^{\infty}\mathrm{e}^{-t^2/2}\,\mathrm{d}t\leq \|h-\Phi h\|,
\end{align*}
where we used the inequality $\int_w^{\infty}\mathrm{e}^{-t^2/2}\,\mathrm{d}t\leq w^{-1}\mathrm{e}^{-w^2/2}$ for $w>0$ (Chen et al$.$ \cite{chen}, p$.$ 37).  The proof for $w<0$ is similar, with the difference being that we use formula (\ref{hundred1}) for $f'(w)$ instead of formula (\ref{hundred}).

We now use inequality (\ref{wfbound}) to deduce inequality (\ref{f''bound}).  Suppose that $h\in C_b^{k-1}(\mathbb{R})$, where $k\geq 2$.  Substituting the integral formula (\ref{3x3min}) for $f'(w)$ into (\ref{wfbound}) and replacing $h-\Phi h$ by a differentiable function $g$ (note that $g'=h'$) yields the inequality
\begin{equation}\label{misdi}\bigg|w\int_0^{\infty}\!\int_{-\infty}^{\infty}\mathrm{e}^{-s}g'(\mathrm{e}^{-s}w+x\sqrt{1-\mathrm{e}^{-2s}})\phi(x)\,\mathrm{d}x\,\mathrm{d}s\bigg|\leq \|g\|,
\end{equation}
where $\phi(x)$ is the standard normal density.  From the integral formula (\ref{ff6ffvi}), we see that we require a bound on
\begin{equation}\label{yhn}|wf^{(k)}(w)|=\bigg|w\int_0^{\infty}\!\int_{-\infty}^{\infty}\frac{\mathrm{e}^{-ks}}{\sqrt{1-\mathrm{e}^{-2s}}}h^{(k-1)}(\mathrm{e}^{-s}w+x\sqrt{1-\mathrm{e}^{-2s}})x\phi(x)\,\mathrm{d}x\,\mathrm{d}s\bigg|, \qquad k\geq 2,
\end{equation}
and we deduce a bound for this quantity by using inequality (\ref{misdi}).

Now we use an approximation result that is given in Section 23.3 of Priestley \cite{priestley}.  If $\theta$ is an integrable function, then for any $\epsilon>0$ there exists a differentiable function $\varphi$ with compact support such that $|\int\theta-\int\varphi|\leq\int|\theta-\varphi|<\epsilon$.  Moreover, on examining the proof of this result (see Section 11.13 and Lemma 12.13), we see that the differentiable function $\varphi$ can be chosen such that $\|\varphi\|\leq\|\theta\|$.  Since $h^{(k-1)}$ is not assumed to be differentiable, we apply this result to approximate the integral (\ref{yhn}).  On doing so, we have that for every $\epsilon>0$ there exists a differentiable function $g$ with $\|g\|\leq\|h^{(k-1)}\|$ such that
\begin{align*}|wf^{(k)}(w)|&<\bigg|w\int_0^{\infty}\!\int_{-\infty}^{\infty}\frac{\mathrm{e}^{-ks}}{\sqrt{1-\mathrm{e}^{-2s}}}g(\mathrm{e}^{-s}w+x\sqrt{1-\mathrm{e}^{-2s}})x\phi(x)\,\mathrm{d}x\,\mathrm{d}s\bigg|+\epsilon \\
&=\bigg|w\int_0^{\infty}\!\int_{-\infty}^{\infty}\mathrm{e}^{-ks}g'(\mathrm{e}^{-s}w+x\sqrt{1-\mathrm{e}^{-2s}})\phi(x)\,\mathrm{d}x\,\mathrm{d}s\bigg|+\epsilon,
\end{align*} 
where we used integration by parts to obtain the equality.  Define $g_+'(t)\mathrel{\mathop:}=\mathrm{max}(g'(t),0)$ and $g_-'(t)\mathrel{\mathop:}=\mathrm{max}(-g'(t),0)$.  Since $g_+'{(k)}$ and $g_-'{(k)}$ have no sign changes and $\mathrm{e}^{-ks}< \mathrm{e}^{-s}$ for $k\geq 2$, we have
\begin{align*}|wf^{(k)}(w)|&< \max\bigg\{|w|\bigg|\int_0^{\infty}\!\int_{-\infty}^{\infty}\mathrm{e}^{-ks}g_+'(\mathrm{e}^{-s}w+x\sqrt{1-\mathrm{e}^{-2s}})\phi(x)\,\mathrm{d}x\,\mathrm{d}s\bigg|, \\
&\quad|w|\bigg|\int_0^{\infty}\!\int_{-\infty}^{\infty}\mathrm{e}^{-ks}g_-'(\mathrm{e}^{-s}w+x\sqrt{1-\mathrm{e}^{-2s}})\phi(x)\,\mathrm{d}x\,\mathrm{d}s\bigg|\bigg\}+\epsilon \\
&\leq \max\bigg\{|w|\bigg|\int_0^{\infty}\!\int_{-\infty}^{\infty}\mathrm{e}^{-s}g_+'(\mathrm{e}^{-s}w+x\sqrt{1-\mathrm{e}^{-2s}})\phi(x)\,\mathrm{d}x\,\mathrm{d}s\bigg|, \\
&\quad|w|\bigg|\int_0^{\infty}\!\int_{-\infty}^{\infty}\mathrm{e}^{-s}g_-'(\mathrm{e}^{-s}w+x\sqrt{1-\mathrm{e}^{-2s}})\phi(x)\,\mathrm{d}x\,\mathrm{d}s\bigg|\bigg\}+\epsilon \\
&\leq\max\{\|g\|,\:\|g\|\}+\epsilon\leq\|h^{(k-1)}\|+\epsilon,
\end{align*}
and to obtain the second to last inequality we used (\ref{misdi}) with $g'$ replaced by $g_+'$ and $g_-'$.  Letting $\epsilon\rightarrow 0$ completes the proof of inequality (\ref{f''bound}).
\end{proof}

With Lemma \ref{zthird333467} proved we are now able to establish the following bounds for the derivatives of the solution of the standard normal Stein equation. 

\begin{proposition}\label{snnewz}Suppose $h\in C_b^{n-2}(\mathbb{R})$, where $n\geq 3$.  Then the $k$-th order derivative of the solution (\ref{hundred}) of the standard normal Stein equation (\ref{normal equation}) satisfies the bound
\begin{equation}\label{asdert}\|f^{(k)}\|\leq 3\|h^{(k-2)}\|, \qquad k=3,4,\ldots,n.
\end{equation}
\end{proposition}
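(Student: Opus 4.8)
The plan is to differentiate the Stein equation $(\ref{normal equation})$ exactly $k-2$ times and then control the three resulting terms using Lemma $\ref{zthird333467}$ and the classical bound $(\ref{cheque})$ specialised to the univariate case.

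Before differentiating, I would check that $f$ is genuinely $n$ times differentiable under the hypothesis $h\in C_b^{n-2}(\mathbb{R})$. The representation $(\ref{hundred})$ shows that $f'$ is continuous (indeed locally Lipschitz) with no regularity assumption on $h$ beyond boundedness, and then $f''=wf'+(h-\Phi h)$; iterating, $f^{(j)}=\frac{d^{j-2}}{dw^{j-2}}\big(wf'\big)+h^{(j-2)}$ is continuous as soon as $h\in C^{j-2}(\mathbb{R})$, so $f\in C^{n}(\mathbb{R})$ and the manipulations below are legitimate.

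Now fix $k\in\{3,\ldots,n\}$ and differentiate $(\ref{normal equation})$ a total of $k-2$ times. Since $\frac{d^m}{dw^m}(w)=0$ for $m\ge 2$, the Leibniz rule gives $\frac{d^{k-2}}{dw^{k-2}}\big(wf'(w)\big)=wf^{(k-1)}(w)+(k-2)f^{(k-2)}(w)$, so that
\begin{equation*}
f^{(k)}(w)=h^{(k-2)}(w)+wf^{(k-1)}(w)+(k-2)f^{(k-2)}(w).
\end{equation*}
By Lemma $\ref{zthird333467}$ applied with $k-1$ in place of $k$ (permissible since $2\le k-1\le n$ and $h\in C_b^{k-2}(\mathbb{R})$) we have $\|wf^{(k-1)}(w)\|\le\|h^{(k-2)}\|$, and by $(\ref{cheque})$ applied with $k-2$ in place of $k$ (permissible since $k-2\ge 1$ and $h^{(k-2)}$ is bounded) we have $(k-2)\|f^{(k-2)}\|\le\|h^{(k-2)}\|$. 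Adding the three bounds yields $\|f^{(k)}\|\le 3\|h^{(k-2)}\|$.

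I do not anticipate a genuine obstacle: the argument is just differentiation of the Stein equation combined with two bounds already in hand. The only points needing attention are the justification that $f\in C^{n}(\mathbb{R})$ (so that $f^{(k)}$, $f^{(k-1)}$, $f^{(k-2)}$ are the honest pointwise derivatives appearing in the displayed identity) and verifying that the index shifts $k\mapsto k-1$ and $k\mapsto k-2$ keep us inside the admissible ranges of Lemma $\ref{zthird333467}$ and $(\ref{cheque})$, both of which hold under $h\in C_b^{n-2}(\mathbb{R})$ for $3\le k\le n$. It is also worth noting that for $k=3$ this recovers $\|f^{(3)}\|\le 3\|h'\|$, slightly weaker than Stein's $\|f^{(3)}\|\le 2\|h'\|$ but with the uniform constant $3$.
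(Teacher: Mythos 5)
Your proof is correct and follows essentially the same route as the paper: both differentiate the Stein equation to obtain $f^{(k)}(w)=wf^{(k-1)}(w)+(k-2)f^{(k-2)}(w)+h^{(k-2)}(w)$, then bound $\|wf^{(k-1)}(w)\|$ by Lemma \ref{zthird333467} and $(k-2)\|f^{(k-2)}\|$ by (\ref{cheque}), giving the constant $3$. Your additional remarks on the regularity of $f$ and the admissible index ranges are sound but not a point of divergence from the paper's argument.
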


\begin{proof}The standard normal Stein equation is $f''(w)-wf'(w)=h(w)-\Phi h$.  By a straightforward induction on $k$ we have
\[f^{(k)}(w)=wf^{(k-1)}(w)+(k-2)f^{(k-2)}(w)+h^{(k-2)}(w).\]
Applying the triangle inequality and using inequalities (\ref{cheque}) and (\ref{f''bound}), we have, for every $w\in\mathbb{R}$,
\begin{align*}|f^{(k)}(w)|&\leq|wf^{(k-1)}(w)|+(k-2)\|f^{(k-2)}\|+\|h^{(k-2)}\| \\
&\leq \|h^{(k-2)}\|+(k-2)\cdot\frac{\|h^{(k-2)}\|}{k-2}+\|h^{(k-2)}\| \\
\label{60min}&=3\|h^{(k-2)}\|,
\end{align*}
as required.
\end{proof}

\begin{remark}\emph{Our bound (\ref{asdert}) for the univariate case involves derivatives of $h$ two degrees lower than $f$, whereas bound (\ref{charm}) for the multivariate case involves derivatives of $h$ just one degree lower than $f$.  This improvement comes from exploiting the fact that the standard normal Stein equation is a first order linear differential equation.  As was observed by Rai\v{c} \cite{raic clt} and Chatterjee and Meckes \cite{chatterjee 3}, this improvement is not possible in the multivariate case.  The example they considered was the function
\[h(w_1,w_2)=\max\{\min\{w_1,w_2\},0\}\]
for which the solution (\ref{mvnsoln}) of the multivariate normal Stein equation (\ref{mvn1}) is twice differentiable but $\frac{\partial^2f(\mathbf{w})}{\partial w_1\partial w_2}$ is not Lipschitz.}
\end{remark}

\section{Matching moments limit theorems}

We begin this section by establishing a general bound for the distance between the distribution of a standardised sum of independent random variables and the standard normal distribution.  We shall mostly concentrate on univariate limit theorems, although at the end of this section we note that the generalisation to vectors of independent random variables is straightforward.  

\begin{theorem}\label{123421}Let $X_1,X_2,\ldots X_n$ be independent random variables with $\mathbb{E}|X_i|^{p+1}<\infty$ for $1\leq i\leq n$.  For all $1\leq i\leq n$ and non-negative integers $k\leq p$, define $\epsilon_{i,k}=\mathbb{E}X_i^k-\mathbb{E}Z^k$.  Let $W_n=\frac{1}{\sqrt{n}}\sum_{i=1}^nX_i$.  Then, for all $h\in C_b^{p+1}(\mathbb{R})$, we have 
\begin{align}|\mathbb{E}h(W_n)-\Phi h|&\leq \frac{\|h'\|}{\sqrt{n}}\sum_{i=1}^n|\epsilon_{i,1}|+\sum_{i=1}^n\sum_{k=1}^{p-1}\frac{N_k}{k!n^{(k+1)/2}}|k\epsilon_{i,k-1}-\epsilon_{i,k+1}| \nonumber \\
\label{ffvi6}&\quad +\frac{N_p}{n^{(p+1)/2}}\sum_{i=1}^n\bigg(\frac{\mathbb{E}|X_i|^{p-1}}{(p-1)!}+\frac{\mathbb{E}|X_i|^{p+1}}{p!}\bigg),
\end{align}
where
\[N_k=\min\bigg\{3\|h^{(k-1)}\|,\:\frac{\Gamma(\frac{k+1}{2})\|h^{(k)}\|}{\sqrt{2}\Gamma(\frac{k}{2}+1)},\:\frac{\|h^{(k+1)}\|}{k+1}\bigg\}, \qquad 1\leq k\leq p.\]
\end{theorem}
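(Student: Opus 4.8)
The plan is to use the Stein equation together with a local (independence-based) decomposition of the quantity $\mathbb{E}[W_nf'(W_n)]$, and then Taylor expand. Let $f$ be the solution of the standard normal Stein equation \eqref{normal equation} with test function $h$; by \eqref{expect} it suffices to bound $|\mathbb{E}[f''(W_n)-W_nf'(W_n)]|$. Write $W_n=\frac{1}{\sqrt n}\sum_i X_i$ and, for each $i$, let $W_n^{(i)}=W_n-\frac{X_i}{\sqrt n}$, which is independent of $X_i$. The key identity is
\[
\mathbb{E}[W_nf'(W_n)]=\frac{1}{\sqrt n}\sum_{i=1}^n\mathbb{E}[X_i f'(W_n)]
=\frac{1}{\sqrt n}\sum_{i=1}^n\mathbb{E}\big[X_i\big(f'(W_n)-f'(W_n^{(i)})\big)\big]+\frac{1}{\sqrt n}\sum_{i=1}^n\mathbb{E}[X_i]\,\mathbb{E}[f'(W_n^{(i)})],
\]
using independence of $X_i$ and $W_n^{(i)}$ in the second term. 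Since $\mathbb{E}[X_i]=\epsilon_{i,1}+\mathbb{E}Z=\epsilon_{i,1}$ (as $\mathbb{E}Z=0$), the second term contributes the $\frac{\|h'\|}{\sqrt n}\sum_i|\epsilon_{i,1}|$ piece after bounding $\|f'\|$ by $\|h'\|$ — wait, more carefully, $\|f'\|\le\sqrt{\pi/2}\|h-\Phi h\|$ from \eqref{doblerbound}, so one instead uses a first-order Taylor step on $f'(W_n^{(i)})$ around... actually the cleanest route is to note $\mathbb{E}[X_i]\mathbb{E}[f'(W_n^{(i)})]$ and just bound $|\mathbb{E}[f'(W_n^{(i)})]|$ directly; but to get $\|h'\|$ rather than $\|h\|$ one writes $f'(W_n^{(i)})=f'(0)+\int$, or more simply absorbs this term by pairing it, via $f''=wf'+h-\Phi h$ considerations. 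I will handle this term by the substitution $\mathbb{E}[f'(W_n^{(i)})]$ and the bound $\|f'\|\le\|h'\|$ following from \eqref{cheque} with $k=1$ applied to... hmm. The correct and standard move: keep $\mathbb{E}[f'(W_n^{(i)})]$ and use $|\mathbb{E}[f'(W_n^{(i)})]|\le\|f'\|$, but since we want $\|h'\|$, use the sharper representation — actually \eqref{cheque} gives $\|f'\|\le\|h'\|$ only for $k=1$ if we read $\frac1k\|h^{(k)}\|$... no, \eqref{cheque} gives $\|f^{(k)}\|\le\frac1k\|h^{(k)}\|$, so $\|f'\|\le\|h'\|$. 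Good; that explains the $\|h'\|$.

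Next, expand the first sum. For the $i$-th term, $f'(W_n)-f'(W_n^{(i)})$ is handled by Taylor's theorem in the increment $\frac{X_i}{\sqrt n}$, carried out to order $p-1$ with integral remainder; simultaneously one expands $f''(W_n)$ around $W_n^{(i)}$ to order $p-2$ — the two expansions are designed so that the combination $f''(W_n)-W_nf'(W_n)$, summed over $i$, telescopes (the "local" Stein approach of Goldstein–Reinert \cite{goldstein} and Reinert–Röllin \cite{reinert 1}). Concretely, one uses the identity $\mathbb{E}f''(W_n)=\sum_i\frac{1}{n}\mathbb{E}f''(W_n)$ (since there are $n$ equal copies, or rather one writes $1=\frac1n\sum_i 1$), expands each $f''(W_n)$ and each $X_if'(W_n)$ around $W_n^{(i)}$, and collects terms by the power $\frac{X_i^k}{n^{(k+1)/2}}$. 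The coefficient of $f^{(k+1)}(W_n^{(i)})$ turns out to be proportional to $\frac{1}{k!n^{(k+1)/2}}\mathbb{E}[X_i^{k}\cdot(\text{something})]-\frac{1}{(k-1)!}\cdots$, and using $\mathbb{E}[X_i^k]=\epsilon_{i,k}+\mathbb{E}Z^k$ together with the recursion $\mathbb{E}Z^{k+1}=k\,\mathbb{E}Z^{k-1}$ for normal moments, the normal-moment parts cancel, leaving exactly the $\frac{N_k}{k!n^{(k+1)/2}}|k\epsilon_{i,k-1}-\epsilon_{i,k+1}|$ terms; here $\mathbb{E}[f^{(k+1)}(W_n^{(i)})]$ is bounded in absolute value by $N_k$, which is the minimum of the three available bounds $3\|h^{(k-1)}\|$ (Proposition \ref{snnewz}), $\frac{\Gamma(\frac{k+1}{2})}{\sqrt2\,\Gamma(\frac{k}{2}+1)}\|h^{(k)}\|$ (Proposition \ref{fingerscrossed}, \eqref{charm}/\eqref{mkf} in $d=1$, reindexed $k\mapsto k+1$), and $\frac{1}{k+1}\|h^{(k+1)}\|$ (\eqref{cheque} with $k\mapsto k+1$). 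The remainder term at order $p$ uses the crudest bound $\|f^{(p+1)}\|\le N_p/(\ldots)$... rather, the remainder involves $f^{(p)}$ and $f^{(p+1)}$ evaluated along the Taylor path, bounded by $N_{p-1}$-type and $N_p$-type quantities, and the increment moments give $\mathbb{E}|X_i|^{p-1}$ and $\mathbb{E}|X_i|^{p+1}$ with the stated factorials; hence the last line of \eqref{ffvi6}. Throughout, $h\in C_b^{p+1}$ guarantees all invoked derivative bounds (which need $h\in C_b^{p-1}$ for Proposition \ref{snnewz} at $k=p$, $C_b^{p}$ for \eqref{charm}, $C_b^{p+1}$ for \eqref{cheque}) are valid.

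The main obstacle is purely bookkeeping: carrying out the two coupled Taylor expansions to the right order and verifying that the normal-moment contributions cancel at every order $k=1,\dots,p-1$, leaving precisely $k\epsilon_{i,k-1}-\epsilon_{i,k+1}$. This requires care with the combinatorial coefficients (the $\frac{1}{k!}$ versus $\frac{1}{(k-1)!}$, and the factor $k$ multiplying $\epsilon_{i,k-1}$ coming from the normal moment recursion $\mathbb{E}Z^{k+1}=k\mathbb{E}Z^{k-1}$, equivalently from differentiating $e^{-w^2/2}$), and with keeping the remainders controlled by only the $(p\!+\!1)$-th moment. The choice to bound each $\mathbb{E}f^{(k+1)}(W_n^{(i)})$ by the minimum $N_k$ rather than committing to one of the three bounds is what lets the statement be uniform; no single one of Propositions \ref{fingerscrossed}, \ref{snnewz}, or \eqref{cheque} dominates for all $k$ and all $h$, so one simply takes whichever is smallest. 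Once the algebraic identity for the coefficients is pinned down, assembling \eqref{ffvi6} is immediate from the triangle inequality.
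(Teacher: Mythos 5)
Your proposal follows essentially the same route as the paper: the same decomposition $\mathbb{E}f''(W_n)-\mathbb{E}W_nf'(W_n)$ with $W_n^{(i)}=W_n-\tfrac{1}{\sqrt n}X_i$, coupled Taylor expansions of $f''(W_n)$ to order $p-2$ and of $X_if'(W_n)$ to order $p-1$ about $W_n^{(i)}$, cancellation via $k\mathbb{E}Z^{k-1}-\mathbb{E}Z^{k+1}=0$, bounding $\|f'\|\le\|h'\|$ by (\ref{cheque}) and $\mathbb{E}f^{(k+1)}(W_n^{(i)})$ by the minimum $N_k$ of (\ref{cheque}), (\ref{charm}) and (\ref{asdert}). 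The only cosmetic imprecision is in the remainder discussion (both remainder terms involve $\|f^{(p+1)}\|$ and are bounded by $N_p$, with the factors $\mathbb{E}|X_i|^{p-1}/(p-1)!$ and $\mathbb{E}|X_i|^{p+1}/p!$ exactly as you state), so the argument is correct and matches the paper's proof.
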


\begin{proof}Throughout this proof we set $W:= W_n$.  We aim to bound $\mathbb{E}h(W)-\Phi h$, and do so by bounding $\mathbb{E}f''(W)-\mathbb{E}Wf'(W)$, where $f$ is the solution of the Stein equation (\ref{normal equation}).  We begin by letting $W^{(i)}=W-\frac{1}{\sqrt{n}}X_i$ and observing that $W^{(i)}$ and $X_i$ are independent.  Taylor expanding $f''(W)$ and $f'(W)$ about $W^{(i)}$ gives
\begin{align*}\mathbb{E}f''(W)-\mathbb{E}Wf'(W)&=\frac{1}{n}\sum_{i=1}^n\mathbb{E}f''(W)-\frac{1}{\sqrt{n}}\sum_{i=1}^n\mathbb{E}X_if'(W) \\
&=\sum_{i=1}^n\sum_{j=0}^{p-2}\frac{1}{j!n^{j/2+1}}\mathbb{E}X_i^j\mathbb{E}f^{(j+2)}(W^{(i)}) \\
&\quad-\sum_{i=1}^n\sum_{k=0}^{p-1}\frac{1}{k!n^{k/2+1/2}}\mathbb{E}X_i^{k+1}\mathbb{E}f^{(k+1)}(W^{(i)})+\sum_{i=1}^n(R_{1,i}+R_{2,i}),
\end{align*}
where
\begin{equation*}|R_{1,i}|\leq\frac{\mathbb{E}|X_i|^{p-1}\|f^{(p+1)}\|}{(p-1)!n^{(p+1)/2}} \qquad \mbox{and} \qquad |R_{2,i}|\leq\frac{\mathbb{E}|X_i|^{p+1}\|f^{(p+1)}\|}{p!n^{(p+1)/2}}.
\end{equation*}
Using independence and collecting terms, we can write this as
\begin{align*}\mathbb{E}f''(W)-\mathbb{E}Wf'(W)&=\sum_{i=1}^n\sum_{k=1}^{p-1}\frac{1}{k!n^{k/2+1/2}}[k\mathbb{E}X_i^{k-1}-\mathbb{E}X_i^{k+1}]\mathbb{E}f^{(k+1)}(W^{(i)})\\
&\quad-\frac{1}{\sqrt{n}}\sum_{i=1}^n\mathbb{E}X_i\mathbb{E}f'(W^{(i)})+\sum_{i=1}^n(R_{1,i}+R_{2,i}).
\end{align*}
Now, for even $k$ we have $k\mathbb{E}Z^{k-1}-\mathbb{E}Z^{k+1}=0$, and for odd $k$,
\[k\mathbb{E}Z^{k-1}-\mathbb{E}Z^{k+1}=k\cdot\frac{2^{(k-1)/2}\Gamma(\frac{k}{2})}{\sqrt{\pi}}-\frac{2^{(k+1)/2}\Gamma(\frac{k}{2}+1)}{\sqrt{\pi}}=0,\]
where we used that $\mathbb{E}|Z|^k=\frac{2^{k/2}\Gamma(\frac{k+1}{2})}{\sqrt{\pi}}$ (Winkelbauer \cite{winkelbauer}, formula 17) and the recurrence formula $\Gamma(x+1)=x\Gamma(x)$.  Therefore, recalling that $\epsilon_{i,k}=\mathbb{E}X_i^k-\mathbb{E}Z^k$, we have
\begin{align*}|\mathbb{E}f''(W)-\mathbb{E}Wf'(W)|&=\bigg|\sum_{i=1}^n\sum_{k=1}^{p-1}\frac{1}{k!n^{k/2+1/2}}[k\epsilon_{i,k-1}-\epsilon_{i,k}]\mathbb{E}f^{(k+1)}(W^{(i)})\\
&\quad-\frac{1}{\sqrt{n}}\sum_{i=1}^n\epsilon_{i,1}\mathbb{E}f'(W^{(i)})+\sum_{i=1}^n(R_{1,i}+R_{2,i})\bigg| \\
&\leq \sum_{i=1}^n\sum_{k=1}^{p-1}\frac{\|f^{(k+1)}\|}{k!n^{k/2+1/2}}|k\epsilon_{i,k-1}-\epsilon_{i,k}|\\
&\quad+\frac{\|f'\|}{\sqrt{n}}\sum_{i=1}^n|\epsilon_{i,1}|+\sum_{i=1}^n(|R_{1,i}|+|R_{2,i}|).
\end{align*}
Inequality (\ref{ffvi6}) now follows from using inequality (\ref{cheque}) to bound $\|f'\|$ and using inequalities (\ref{cheque}), (\ref{charm}) and (\ref{asdert}) to bound $\|f^{(j)}\|$ for $2\leq j\leq p+1$.
\end{proof}

The bound given in Theorem \ref{123421} is in terms of the quantities $\epsilon_{i,k}=\mathbb{E}X_i^k-\mathbb{E}Z^k$, and it is clear that if the $\epsilon_{i,k}$ are 'small' then the bound (\ref{ffvi6}) will also be 'small'.  In particular, if the first $p$ moments of the $X_i$ agree with the first $p$ moments of the standard normal distribution then we have a fast convergence rate:

\begin{corollary}\label{1234321ww}Let $X_1,X_2,\ldots,X_n$ be independent random variables with $\mathbb{E}X_i^k=\mathbb{E}Z^k$ for all $1\leq i\leq n$ and all positive integers $k\leq p$, and suppose that $\mathbb{E}|X_i|^{p+1}<\infty$ for $1\leq i\leq n$.  Let $W_n=\frac{1}{\sqrt{n}}\sum_{i=1}^nX_i$.  Then, for all $h\in C_b^{p+1}(\mathbb{R})$, we have
\begin{equation}\label{ffvi6yy}|\mathbb{E}h(W_n)-\Phi h|\leq \frac{N_p}{n^{(p+1)/2}}\sum_{i=1}^n\bigg(\frac{\mathbb{E}|X_i|^{p-1}}{(p-1)!}+\frac{\mathbb{E}|X_i|^{p+1}}{p!}\bigg),
\end{equation}
where $N_p$ is defined as in Theorem \ref{123421}.  With the additional assumption that the random variables are identically distributed, bound (\ref{ffvi6yy}) is of order $n^{-(p-1)/2}$ for large $n$.
\end{corollary}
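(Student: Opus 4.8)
The plan is to obtain the corollary as a direct specialisation of Theorem \ref{123421}, the point being that the matching-moments hypothesis annihilates the first two groups of terms in the bound (\ref{ffvi6}).

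First I would translate the hypothesis into the language of the $\epsilon_{i,k}$: since $\mathbb{E}X_i^k=\mathbb{E}Z^k$ for every positive integer $k\leq p$, we have $\epsilon_{i,k}=0$ for $1\leq k\leq p$, and trivially $\epsilon_{i,0}=\mathbb{E}X_i^0-\mathbb{E}Z^0=1-1=0$ as well. In particular $\epsilon_{i,1}=0$ for each $i$, so the term $\frac{\|h'\|}{\sqrt{n}}\sum_{i=1}^n|\epsilon_{i,1}|$ in (\ref{ffvi6}) is zero. Moreover, for each $k$ with $1\leq k\leq p-1$ both indices $k-1$ and $k+1$ lie in $\{0,1,\dots,p\}$, whence $\epsilon_{i,k-1}=\epsilon_{i,k+1}=0$ and the double sum $\sum_{i=1}^n\sum_{k=1}^{p-1}\frac{N_k}{k!n^{(k+1)/2}}|k\epsilon_{i,k-1}-\epsilon_{i,k+1}|$ vanishes term by term. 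Applying Theorem \ref{123421} and discarding these zero contributions leaves exactly (\ref{ffvi6yy}). Note that $N_p$ is a finite constant depending only on $h$, since $h\in C_b^{p+1}(\mathbb{R})$ guarantees $\|h^{(p-1)}\|,\|h^{(p)}\|,\|h^{(p+1)}\|<\infty$.

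For the order statement I would specialise to the i.i.d.\ case. Write $a:=\mathbb{E}|X_1|^{p-1}$ and $b:=\mathbb{E}|X_1|^{p+1}$; both are finite, the latter by hypothesis and the former since $\mathbb{E}|X_1|^{p-1}\leq 1+\mathbb{E}|X_1|^{p+1}$. Then the sum over $i$ in (\ref{ffvi6yy}) equals $n\bigl(\frac{a}{(p-1)!}+\frac{b}{p!}\bigr)$, so the bound becomes $N_p\bigl(\frac{a}{(p-1)!}+\frac{b}{p!}\bigr)n^{1-(p+1)/2}=C(h)\,n^{-(p-1)/2}$ for a constant $C(h)$ independent of $n$, which is the asserted rate.

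I do not anticipate any real obstacle: the corollary is a plug-in of the hypothesis into an already proved inequality. The only points needing a moment's attention are that the $k=1$ summand of the middle sum involves $\epsilon_{i,0}$ (so one must observe that this too is zero), and that both $\mathbb{E}|X_i|^{p-1}$ and $N_p$ are finite under the stated assumptions, so that the bound and its stated order are meaningful.
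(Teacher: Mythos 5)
Your proposal is correct and follows essentially the same route as the paper: the hypothesis gives $\epsilon_{i,k}=0$ for all $0\leq k\leq p$, so the first two groups of terms in (\ref{ffvi6}) vanish and the bound (\ref{ffvi6yy}) follows immediately from Theorem \ref{123421}, with the $n^{-(p-1)/2}$ rate in the i.i.d.\ case read off directly. Your extra checks (that $\epsilon_{i,0}=0$ and that $N_p$ and $\mathbb{E}|X_i|^{p-1}$ are finite) are fine but not a departure from the paper's argument.
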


\begin{proof}We have $\epsilon_{i,k}=0$ for all $1\leq i\leq n$ and non-negative integers $k\leq p$, and the result now follows immediately from Theorem \ref{123421}.
\end{proof}

Taking $n=1$ in Corollary \ref{1234321ww} gives the following bound on the distance between a probability distribution and the standard normal distribution when their first $p$ moments agree.  As one would expect, the distance tends to zero as $p\rightarrow\infty$.  

\begin{corollary}Let $X$ be a random variable with $\mathbb{E}X^k=\mathbb{E}Z^k$ for all positive integers $k\leq p$, and suppose that $\mathbb{E}|X|^{p+1}<\infty$.  Let $N_p$ be defined as in Theorem \ref{123421}.  Then, for all $h\in C_b^{p+1}(\mathbb{R})$, we have
\begin{equation*}|\mathbb{E}h(X)-\Phi h|\leq N_p\bigg(\frac{\mathbb{E}|X|^{p-1}}{(p-1)!}+\frac{\mathbb{E}|X|^{p+1}}{p!}\bigg).
\end{equation*}
\end{corollary}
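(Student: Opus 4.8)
The plan is to derive this statement as the $n=1$ instance of Corollary \ref{1234321ww}, exactly as the preceding sentence in the text already indicates. First I would set $n=1$ and take the single random variable $X_1 := X$. The hypothesis $\mathbb{E}|X|^{p+1}<\infty$ is the moment condition required in Corollary \ref{1234321ww}, and $\mathbb{E}X^k=\mathbb{E}Z^k$ for all positive integers $k\leq p$ is precisely the moment-matching condition there; independence is vacuous when there is only one summand. Since $W_1=\frac{1}{\sqrt{1}}\sum_{i=1}^{1}X_i=X$, the left-hand side of (\ref{ffvi6yy}) becomes $|\mathbb{E}h(X)-\Phi h|$, which is exactly the quantity to be bounded.

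Next I would simplify the right-hand side of (\ref{ffvi6yy}) when $n=1$: the prefactor $n^{(p+1)/2}$ equals $1$, the sum over $i$ collapses to its single $i=1$ term, and $N_p$ depends only on $h$ (not on $n$), so it is unchanged. This yields
\[
|\mathbb{E}h(X)-\Phi h|\leq N_p\bigg(\frac{\mathbb{E}|X|^{p-1}}{(p-1)!}+\frac{\mathbb{E}|X|^{p+1}}{p!}\bigg),
\]
which is the asserted inequality. There is no genuine obstacle here: the corollary is a pure specialisation of the previous one, so the only thing to verify is that the hypotheses transfer verbatim to the case of a single random variable and that the $n$-dependent factors reduce to $1$ — both of which are immediate.
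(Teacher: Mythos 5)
Your proposal is correct and is exactly the paper's route: the text introduces this corollary with ``Taking $n=1$ in Corollary \ref{1234321ww}'', and your specialisation (noting $W_1=X$, the vanishing of the $n$-dependent factors, and the vacuity of independence for a single summand) is all that is needed.
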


\begin{remark}\emph{If we take $p=3$ in Corollary \ref{1234321ww}, we have a convergence rate of order $n^{-1}$, which is in agreement with the $n^{-1}$ rate for vanishing third moments obtained, through the use of zero bias couplings, by Goldstein and Reinert \cite{goldstein}.  Whilst their choice of coupling differs from ours, both bounds only differ by multiplicative constants and through the different inequalities used to bound the derivatives of the solution of the Stein equation.}
\end{remark}

\begin{remark}\emph{In general, we would except slower convergence rates in the Kolmogorov and Wasserstein metrics than the $n^{-(p-1)/2}$ rate of Corollary \ref{1234321ww}.  Consider the following example.  Suppose $Y_1,\ldots,Y_n$ are independent Bernoulli random variables with parameter $\frac{1}{2}$, and set $X_i=2(Y_i-\frac{1}{2})$ so that $\mathbb{E}X_i=\mathbb{E}X_i^3=0$ and $\mathbb{E}X_i^2=\mathbb{E}X_i^4=1$.  The first three moments of $X_i$ are equal to the first three moments of the standard normal distribution but the fourth moment differs, and so by Corollary \ref{1234321ww} we have a $n^{-1}$ bound for the quantity $|\mathbb{E}h(W_n)-\Phi h|$ for $h\in C_b^2(\mathbb{R})$.}

\emph{However, the Kolmogorov distance between $W_n$, which is a standardised Binomial distribution, and $Z$ is of order $n^{-1/2}$; see Hipp and Mattner \cite{hipp}.  The $n^{-1/2}$ rate is also optimal with respect to Wasserstein distance as can be seen as follows.  The Wasserstein distance between the distributions of $W_n$ and $Z$ is given by $d(\mathcal{L}(W_n),\mathcal{L}(Z))=\sup_{h\in\mathcal{H}}|\mathbb{E}h(W_n)-\mathbb{E}h(Z)|$, where $\mathcal{H}$ is the class of Lipschitz continuous functions on $\mathbb{R}$ with
Lipschitz constant not greater than 1.  Taking $h(x)=|x|$, we have $d(\mathcal{L}(W_n),\mathcal{L}(Z))\geq |\mathbb{E}|W_n|-\mathbb{E}|Z||$.  But
\begin{align*}|\mathbb{E}|W_{n+1}|-\mathbb{E}|W_n||&=\bigg|\frac{1}{\sqrt{n+1}}\mathbb{E}\bigg|\sum_{i=1}^{n+1}X_i\bigg|-\frac{1}{\sqrt{n}}\mathbb{E}\bigg|\sum_{i=1}^nX_i\bigg|\bigg|\\
&\geq\bigg|\frac{1}{\sqrt{n}}\bigg|\mathbb{E}\bigg|\sum_{i=1}^{n+1}X_i\bigg|-\mathbb{E}\bigg|\sum_{i=1}^{n}X_i\bigg|\bigg|-\bigg(\frac{1}{\sqrt{n}}-\frac{1}{\sqrt{n+1}}\bigg)\mathbb{E}\bigg|\sum_{i=1}^{n+1}X_i\bigg|\bigg|\\
&=\bigg|\frac{1}{2\sqrt{n}}-\frac{1}{\sqrt{n}}\bigg(1-\frac{1}{\sqrt{1+1/n}}\bigg)\mathbb{E}\bigg|\sum_{i=1}^{n+1}X_i\bigg|\bigg|=\frac{1}{2\sqrt{n}}+O(n^{-1}),
\end{align*}
where the inequality follows because $|a-b|\geq ||a|-|b||$ for $a,b\in\mathbb{R}$.  For the final assertion we used that for large $n$, $(1+1/n)^{-1/2}=1+O(n^{-1})$ and $\mathbb{E}|\sum_{i=1}^{n}X_i|=\sqrt{\frac{2n}{\pi}}+O(1)$ (see Chen and Shao \cite{chen9}, p$.$ 14).  It therefore follows that the quantity $|\mathbb{E}|W_n|-\mathbb{E}|Z||$ is bounded below by $Cn^{-1/2}$, where $C$ is a positive constant independent of $n$, and so the $n^{-1/2}$ rate is optimal.  Indeed, this argument can easily be extended to show that the Wasserstein distance between a standardised sum of independent, identically distributed discrete random variables and the standard normal distribution is always bounded below by $Kn^{-1/2}$, where $K$ is a positive constant independent of $n$, no matter how many moments are in agreement with those of the standard normal distribution.
}
\end{remark}

We now consider another application of Theorem \ref{123421}.  A well-known result (see Billingsley \cite{billingsley}, Example 30.1 and Theorem 30.2) is that if a random variable $X_n$ has moments of all order, and that $\lim_{n\rightarrow\infty}\mathbb{E}X_n^k=\mathbb{E}Z^k$ for $k=1,2,\ldots$, then $X_n$ converges in distribution to $Z$.  In the following theorem, we obtain a bound, for smooth test functions, on the distance between the distribution $X_n$ and the standard normal distribution.

\begin{theorem}\label{billley}Suppose $X_n$ is a random variable with moments of all order, and that $\mathbb{E}|X_n|^k=o(k!)$ for large $k$.  For all non-negative integers $k$, define $\epsilon_{k,n}=\mathbb{E}X_n^k-\mathbb{E}Z^k$.  Suppose further that there exist positive constants $\delta,$ $\alpha$ and $C$, which are independent of $k$ and $n$, such that
\begin{equation}\label{conddi}\|h^{(k)}\||\epsilon_{n,k}|\leq \frac{C}{n^{\alpha}}k^{-\delta}(k-1)! \quad \mbox{and} \quad \|h^{(k+2)}\||\epsilon_{n,k}|\leq \frac{C}{n^{\alpha}}k^{-\delta}(k-1)! 
\end{equation}
for all $k\geq 1$.  Then, for all $h\in C_b^{\infty}(\mathbb{R})$, 
\begin{equation}\label{xnnb}|\mathbb{E}h(X_n)-\Phi h|\leq\|h'\||\epsilon_{n,1}|+\sum_{k=1}^{\infty}\frac{\|h^{(k+1)}\|}{(k+1)!}|k\epsilon_{n,k-1}-\epsilon_{n,k+1}|\leq \frac{K}{n^{\alpha}},
\end{equation}
where $K$ is a constant which depends on $\delta$, $C$ and $h$ but is independent of $n$. 
\end{theorem}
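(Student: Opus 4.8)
The plan is to deduce the theorem from Theorem \ref{123421} applied to a sum consisting of the single independent random variable $X_n$ (so that the number of summands is $1$). For each integer $p\geq 2$, since $X_n$ has moments of all orders and $h\in C_b^{\infty}$, taking $W_1=X_n$ in (\ref{ffvi6}) gives
\[
|\mathbb{E}h(X_n)-\Phi h|\leq \|h'\||\epsilon_{n,1}|+\sum_{k=1}^{p-1}\frac{N_k}{k!}|k\epsilon_{n,k-1}-\epsilon_{n,k+1}|+\mathcal{R}_p,
\]
where $\mathcal{R}_p:=N_p\big(\mathbb{E}|X_n|^{p-1}/(p-1)!+\mathbb{E}|X_n|^{p+1}/p!\big)$. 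Since one of the three terms in the definition of $N_k$ is $\|h^{(k+1)}\|/(k+1)$, we have $N_k/k!\leq \|h^{(k+1)}\|/(k+1)!$, so the finite sum on the right is dominated by the infinite series in (\ref{xnnb}). Once I have shown that this series converges and that $\mathcal{R}_p\to 0$, letting $p\to\infty$ will give the first inequality of (\ref{xnnb}).

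For the bound $K/n^{\alpha}$ I would estimate the series term by term, using $|k\epsilon_{n,k-1}-\epsilon_{n,k+1}|\leq k|\epsilon_{n,k-1}|+|\epsilon_{n,k+1}|$ to split it into $\sum_{k\geq 1}\frac{k\|h^{(k+1)}\|}{(k+1)!}|\epsilon_{n,k-1}|$ and $\sum_{k\geq 1}\frac{\|h^{(k+1)}\|}{(k+1)!}|\epsilon_{n,k+1}|$. In the first sum the $k=1$ term vanishes because $\epsilon_{n,0}=\mathbb{E}X_n^0-\mathbb{E}Z^0=0$; for $k\geq 2$ the second inequality in (\ref{conddi}) with $k$ replaced by $k-1$ gives $\|h^{(k+1)}\||\epsilon_{n,k-1}|\leq \frac{C}{n^{\alpha}}(k-1)^{-\delta}(k-2)!$, and since $k(k-2)!/(k+1)!=1/\big((k-1)(k+1)\big)$ this sum is at most $\frac{C}{n^{\alpha}}\sum_{k\geq 2}\frac{(k-1)^{-\delta}}{(k-1)(k+1)}$. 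For the second sum, the first inequality in (\ref{conddi}) with $k$ replaced by $k+1$ gives $\|h^{(k+1)}\||\epsilon_{n,k+1}|\leq \frac{C}{n^{\alpha}}(k+1)^{-\delta}k!$, so that sum is at most $\frac{C}{n^{\alpha}}\sum_{k\geq 1}(k+1)^{-1-\delta}$. Finally, the first inequality in (\ref{conddi}) with $k=1$ gives $\|h'\||\epsilon_{n,1}|\leq C/n^{\alpha}$. Since $\delta>0$ both residual series converge, to constants depending only on $\delta$, so collecting terms yields (\ref{xnnb}) with $K=C\big(1+\sum_{k\geq 2}\frac{(k-1)^{-\delta}}{(k-1)(k+1)}+\sum_{k\geq 1}(k+1)^{-1-\delta}\big)$, a constant depending only on $\delta$ and $C$ (hence on $h$ through $C$). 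This also establishes convergence of the series in (\ref{xnnb}), which was needed above.

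The remaining, and principal, obstacle is to show $\mathcal{R}_p\to 0$ as $p\to\infty$ (or at least along a subsequence). Here I would use the bound $N_p\leq 3\|h^{(p-1)}\|$ --- precisely the ``two fewer derivatives'' estimate of Proposition \ref{snnewz}, which exploits the first-order structure of the univariate Stein equation --- and then balance the decay $\mathbb{E}|X_n|^{k}=o(k!)$ against the growth of the derivative norms of $h$. The subtlety is that for $h\in C_b^{\infty}$ the quantities $\|h^{(p-1)}\|$ are not controlled a priori and may grow rapidly; one must use that the hypotheses (\ref{conddi}) tie the growth of $\|h^{(p)}\|$ to the smallness of $\epsilon_{n,p}$, so that $\|h^{(p-1)}\|\,\mathbb{E}|X_n|^{p-1}/(p-1)!$ and the companion term $\|h^{(p-1)}\|\,\mathbb{E}|X_n|^{p+1}/p!$ can be forced to $0$ along suitable $p$. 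I expect this control of the Taylor remainder to be the only genuinely delicate point; the rest is the specialisation of Theorem \ref{123421} together with a routine summation.
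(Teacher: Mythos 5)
Your overall route is the paper's: specialise Theorem \ref{123421} to a single summand, replace $N_k/k!$ by $\|h^{(k+1)}\|/(k+1)!$, estimate the resulting series term by term using (\ref{conddi}), and let $p\to\infty$. The series estimate you give is correct and essentially identical to the paper's: splitting $|k\epsilon_{n,k-1}-\epsilon_{n,k+1}|\leq k|\epsilon_{n,k-1}|+|\epsilon_{n,k+1}|$, applying the second inequality of (\ref{conddi}) at index $k-1$ and the first at index $k+1$, and noting $\epsilon_{n,0}=0$ and $\|h'\||\epsilon_{n,1}|\leq C/n^{\alpha}$, yields summable tails of order $k^{-1-\delta}$ and hence the bound $K/n^{\alpha}$ with $K$ depending only on $\delta$, $C$ and $h$; up to the value of the constant this is exactly the paper's computation.

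The genuine gap is the step you yourself flag as the ``principal obstacle'': you never show that the remainder $\mathcal{R}_p$ tends to $0$ (even along a subsequence of $p$), and without that the passage from the finite-$p$ bound to the first inequality in (\ref{xnnb}) --- and hence the theorem --- is not established. The paper disposes of this step directly: it keeps the remainder in the form $\|h^{(p+1)}\|\big(\mathbb{E}|X_n|^{p-1}/(p-1)!+\mathbb{E}|X_n|^{p+1}/p!\big)$ and appeals to the hypothesis $\mathbb{E}|X_n|^k=o(k!)$ to justify letting $p\to\infty$; it does not invoke Proposition \ref{snnewz} (your $N_p\leq 3\|h^{(p-1)}\|$) at this point. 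Moreover, the repair you sketch --- using (\ref{conddi}) to ``tie the growth of $\|h^{(p)}\|$ to the smallness of $\epsilon_{n,p}$'' --- cannot work as described: the inequalities (\ref{conddi}) give no information whatsoever about $\|h^{(k)}\|$ or $\|h^{(k+2)}\|$ at any index $k$ with $\epsilon_{n,k}=0$, so the derivative norms at those orders are completely unconstrained and there is nothing to balance against $\mathbb{E}|X_n|^{p\pm1}$. So, as written, your proposal is incomplete precisely where it needs to close: you must either argue, as the paper does, that the moment hypothesis suffices to take the limit, or supply an explicit control on the growth of $\|h^{(p)}\|$; the sketch provides neither.
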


\begin{proof}From Theorem \ref{123421}, we have that for all $p\geq 1$,
\begin{align}|\mathbb{E}h(X_n)-\Phi h|&\leq \|h'\||\epsilon_{i,1}|+\sum_{k=1}^{p-1}\frac{\|h^{(k+1)}\|}{(k+1)!}|k\epsilon_{n,k-1}-\epsilon_{n,k+1}|\nonumber  \\
\label{near1}&\quad+\|h^{(p+1)}\|\bigg(\frac{\mathbb{E}|X_n|^{p-1}}{(p-1)!}+\frac{\mathbb{E}|X_n|^{p+1}}{p!}\bigg).
\end{align}
To arrive at the bound (\ref{xnnb}) we must take the limit $p\rightarrow\infty$ on the right-hand side of (\ref{near1}).  We now justify taking this limit.  Using condition (\ref{conddi}), we have that for $k\geq 2$,
\begin{align*}\frac{\|h^{(k+1)}\|}{(k+1)!}|k\epsilon_{n,k-1}-\epsilon_{n,k+1}|&\leq \frac{\|h^{(k+1)}\|}{(k+1)!}\Big[k|\epsilon_{n,k-1}|+|\epsilon_{n,k+1}|\Big] \\
&\leq \frac{C}{n^{\alpha}}\bigg(\frac{1}{(k+1)(k-1)!k^{\delta}}+\frac{1}{(k+1)k^{\delta}}\bigg)\leq \frac{2C}{n^{\alpha}k^{1+\delta}}.
\end{align*}
The series $\sum_{k=1}^{\infty}\frac{1}{k^{1+\delta}}$ is convergent for positive $\delta$, and so, under the conditions of the theorem, the series
\[\sum_{k=1}^{\infty}\frac{\|h^{(k+1)}\|}{(k+1)!}|k\epsilon_{n,k-1}-\epsilon_{n,k+1}|\]
is convergent.  We also have that $\mathbb{E}|X_n|^k=o(k!)$ for large $k$, and so we may take the limit $p\rightarrow\infty$ on the right-hand side of (\ref{near1}):
\begin{equation*}|\mathbb{E}h(X_n)-\Phi h|\leq\|h'\||\epsilon_{n,1}|+\sum_{k=1}^{\infty}\frac{\|h^{(k+1)}\|}{(k+1)!}|k\epsilon_{n,k-1}-\epsilon_{n,k+1}|\leq  \frac{C}{n^{\alpha}}\bigg(\frac{3}{2}+2\sum_{k=1}^{\infty}\frac{1}{k^{1+\delta}}\bigg),
\end{equation*}
where we used that $\epsilon_{n,0}=0$, $\|h'\||\epsilon_{n,1}|\leq \frac{C}{n^{\alpha}}$ and $\|h''\||\epsilon_{n,2}|\leq \frac{C}{n^{\alpha}2^{\delta}}\leq\frac{C}{n^{\alpha}}$ to obtain the final inequality.  This completes the proof.
\end{proof}

\begin{remark}\emph{The condition $\mathbb{E}|X_n|^k=o(k!)$ for large $k$ given in Theorem \ref{billley} is very mild.  In practice, when applying Theorem \ref{billley}, we would consider random variables with moments very close to those of the normal distribution, and for large $k$, $\mathbb{E}|Z|^k=\frac{2^{k/2}\Gamma(\frac{k+1}{2})}{\sqrt{\pi}}\ll k!$.}
\end{remark}

We end this section by noting that the generalisation to vectors of independent random variables is straightforward.  Indeed, we have the following generalisation of Theorem \ref{123421}, from which generalisations of Corollary \ref{1234321ww} and Theorem \ref{billley} to vectors of independent random variables easily follow.

\begin{theorem}\label{lasthm}Let $X_{1,1},\ldots,X_{n,1},\ldots,X_{1,d},\ldots,X_{n,d}$ be independent random variables with finite $(p+1)$-th absolute moments.  For all $1\leq i\leq n$, $1\leq j\leq d$ and non-negative integers $k\leq p$, define $\epsilon_{i,j,k}=\mathbb{E}X_{i,j}^k-\mathbb{E}Z^k$.  Let $\mathbf{X}_i=(X_{i,1},\ldots,X_{i,d})^T$ and set $\mathbf{W}_n=\frac{1}{\sqrt{n}}\sum_{i=1}^n\mathbf{X}_i$.  Then, for all $h\in C_b^{p+1}(\mathbb{R})$, we have 
\begin{align*}|\mathbb{E}h(\mathbf{W}_n)-\mathbb{E}h(\mathbf{Z})|&\leq \sum_{i=1}^{n}\sum_{j=1}^d\frac{|\epsilon_{i,j,1}|}{\sqrt{n}}\bigg\|\frac{\partial h(\mathbf{w})}{\partial w_j}\bigg\|+\sum_{i=1}^{n}\sum_{j=1}^d\sum_{k=1}^{p-1}\frac{M_{j,k}}{k!n^{(k+1)/2}}|k\epsilon_{i,j,k-1}-\epsilon_{i,j,k+1}| \nonumber \\
\label{eqlast}&\quad +\sum_{i=1}^{n}\sum_{j=1}^d\frac{M_{j,p}}{n^{(p+1)/2}}\bigg(\frac{\mathbb{E}|X_{i,j}|^{p-1}}{(p-1)!}+\frac{\mathbb{E}|X_{i,j}|^{p+1}}{p!}\bigg),
\end{align*}
where
\[M_{j,k}=\min\bigg\{\:\frac{\Gamma(\frac{k+1}{2})}{\sqrt{2}\Gamma(\frac{k}{2}+1)} \bigg\|\frac{\partial^{k} h(\mathbf{w})}{\partial w_j^k}\bigg\|,\:\frac{1}{k+1}\bigg\|\frac{\partial^{k+1}h(\mathbf{w})}{\partial w_j^{k+1}}\bigg\|\bigg\}, \quad 1\leq j\leq d,\: 1\leq k\leq p.\]
\end{theorem}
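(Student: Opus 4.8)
The plan is to transcribe the proof of Theorem \ref{123421} into the multivariate setting, working one coordinate at a time and replacing the univariate Stein equation by the multivariate Stein equation (\ref{mvn1}) with $\Sigma$ equal to the $d\times d$ identity matrix, so that its solution (\ref{mvnsoln}) satisfies $\sum_{j=1}^{d}\partial_j^2 f(\mathbf{w})-\mathbf{w}^T\nabla f(\mathbf{w})=h(\mathbf{w})-\mathbb{E}h(\mathbf{Z})$, where I abbreviate $\partial_j:=\partial/\partial w_j$. Evaluating at $\mathbf{W}_n$ and taking expectations reduces the task to bounding $\sum_{j=1}^{d}\mathbb{E}\big[\partial_j^2 f(\mathbf{W}_n)-(W_n)_j\,\partial_j f(\mathbf{W}_n)\big]$; since $h\in C_b^{p+1}(\mathbb{R}^d)$, the representation (\ref{ff6ffvi}) (equivalently (\ref{3x3min})) shows that $f$ has bounded partial derivatives of every order up to $p+1$, so the Taylor expansions and the interchanges of limit and expectation below are legitimate.

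Next, fix $i$ and $j$ and set $\mathbf{W}_n^{(i,j)}=\mathbf{W}_n-\tfrac{1}{\sqrt n}X_{i,j}\mathbf{e}_j$, where $\mathbf{e}_j$ is the $j$-th standard basis vector; since the whole family $\{X_{i,j}\}_{i,j}$ is independent, $\mathbf{W}_n^{(i,j)}$ is independent of $X_{i,j}$. I would then Taylor expand $\partial_j^2 f(\mathbf{W}_n)$ and $\partial_j f(\mathbf{W}_n)$ about $\mathbf{W}_n^{(i,j)}$ \emph{in the single coordinate direction $\mathbf{e}_j$} --- to orders $p-2$ and $p-1$ respectively --- so that only the pure partial derivatives $\partial_j^{m}f$ appear, with remainders bounded by $\tfrac{\mathbb{E}|X_{i,j}|^{p-1}}{(p-1)!\,n^{(p-1)/2}}\|\partial_j^{p+1}f\|$ and $\tfrac{\mathbb{E}|X_{i,j}|^{p+1}}{p!\,n^{(p+1)/2}}\|\partial_j^{p+1}f\|$, exactly mirroring $R_{1,i}$ and $R_{2,i}$ in the proof of Theorem \ref{123421}. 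Writing $\partial_j^2 f(\mathbf{W}_n)=\tfrac1n\sum_{i}\partial_j^2 f(\mathbf{W}_n)$ and $(W_n)_j=\tfrac1{\sqrt n}\sum_i X_{i,j}$, using independence to factor $\mathbb{E}\big[X_{i,j}^{m}\,\partial_j^{m+2} f(\mathbf{W}_n^{(i,j)})\big]=\mathbb{E}[X_{i,j}^{m}]\,\mathbb{E}\big[\partial_j^{m+2} f(\mathbf{W}_n^{(i,j)})\big]$, and collecting terms precisely as in Theorem \ref{123421}, the double sum over $i$ and $j$ rearranges into a first-moment term with coefficients $\mathbb{E}X_{i,j}=\epsilon_{i,j,1}$, a main sum whose $k$-th coefficient is $k\,\mathbb{E}X_{i,j}^{k-1}-\mathbb{E}X_{i,j}^{k+1}$, and the two remainders. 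Invoking the normal-moment identity $k\,\mathbb{E}Z^{k-1}-\mathbb{E}Z^{k+1}=0$ (already verified in the proof of Theorem \ref{123421}) lets me replace $k\,\mathbb{E}X_{i,j}^{k-1}-\mathbb{E}X_{i,j}^{k+1}$ by $k\,\epsilon_{i,j,k-1}-\epsilon_{i,j,k+1}$.

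It then remains to bound the derivatives of $f$ that occur. From (\ref{cheque}) with $k=1$ we get $\|\partial_j f\|\le\|\partial_j h\|$, which gives the first term of the asserted bound. For each $k\ge1$, applying (\ref{charm}) with $\Sigma$ the identity and all $k+1$ derivative indices equal to $j$ (so that $[\sum_{m}\tilde\sigma_{jm}^2]^{1/2}=1$) yields $\|\partial_j^{k+1}f\|\le\frac{\Gamma(\frac{k+1}{2})}{\sqrt2\,\Gamma(\frac k2+1)}\|\partial_j^{k}h\|$, while (\ref{cheque}) gives $\|\partial_j^{k+1}f\|\le\frac1{k+1}\|\partial_j^{k+1}h\|$; taking the smaller of the two is exactly $M_{j,k}$. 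Substituting these into the main sum and the two remainder terms produces the claimed inequality.

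The step requiring the most care is the coordinate-wise Taylor bookkeeping: one must confirm that it suffices to expand each summand of the Stein operator along the single direction $\mathbf{e}_j$ rather than along the full vector $\tfrac1{\sqrt n}\mathbf{X}_i$ --- which is legitimate precisely because the Stein operator for $N(\mathbf{0},I)$ splits as $\sum_j(\partial_j^2-w_j\partial_j)$ and, for fixed $j$, the relevant increment $\tfrac1{\sqrt n}X_{i,j}\mathbf{e}_j$ perturbs only the $j$-th coordinate --- so that no mixed partial derivatives of $h$ survive. Everything else is a routine transcription of the one-dimensional argument, with the $n^{-(k+1)/2}$-scalings and factorials appearing exactly as before.
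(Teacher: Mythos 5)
Your proposal is correct and takes essentially the same route as the paper: the paper's own proof is precisely a transcription of the argument of Theorem \ref{123421} to the Stein equation for the standard multivariate normal (i.e.\ (\ref{mvn1}) with $\Sigma=I_d$), with the bounds (\ref{cheque}) and (\ref{charm}) supplying the constants $M_{j,k}$. Your coordinate-wise Taylor expansion about $\mathbf{W}_n^{(i,j)}=\mathbf{W}_n-\tfrac{1}{\sqrt{n}}X_{i,j}\mathbf{e}_j$ — which correctly exploits the full independence of the $X_{i,j}$ and keeps only the pure partial derivatives $\partial_j^m f$ — simply spells out the details the paper leaves implicit.
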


\begin{proof}The Stein equation for the standard multivariate normal distribution is given by $\sum_{j=1}^d\left(\frac{\partial^2f}{\partial w_j^2}(\mathbf{w})-w_j\frac{\partial f}{\partial w_j}(\mathbf{w})\right)=h(\mathbf{w})-\mathbb{E}h(\mathbf{Z})$.  Evaluating both sides at $\mathbf{W}_n$, taking expectations and then carrying out Taylor expansions as we did in the proof of Theorem \ref{123421} yields the desired bound.  Here we used inequalities (\ref{cheque}) and (\ref{charm}) to bound the partial derivatives of the solution of the Stein equation.
\end{proof}

\section*{Acknowledgements}

During the course of this research the author was supported by an EPSRC DPhil Studentship, an EPSRC Doctoral Prize and EPSRC research grant AMRYO100.  The author would like thank Gesine Reinert for some productive discussions.  The author would also like to thank two anonymous referees for their helpful comments and suggestions, which helped me to prepare an improved manuscript.


\begin{thebibliography}{99}

\bibitem{barbour2} BARBOUR, A. D. Stein's method for diffusion approximations.  \emph{Probab. Theory Related Fields} $\mathbf{84}$ (1990), pp. 297--322.

\bibitem{billingsley} BILLINGSLEY, P.  \emph{Convergence of Probability Measures.} Second ed. Wiley, 1999.

\bibitem{chatterjee 3} CHATTERJEE, S. and MECKES, E.  Multivariate normal approximation using exchangeable pairs.  \emph{ALEA Lat. Am. J. Probab. Math. Stat.} $\mathbf{4}$ (2008) pp. 257–-283.

\bibitem{chen9} CHEN, L. H. Y. and SHAO, Q--M.  Normal Approximation.  In \emph{An Introduction
to Stein's Method. Lect. Notes Ser. Inst. Math. Sci. Natl. Univ. Singap.} Singapore Univ. Press, Singapore. $\mathbf{1}$ (2005), pp. 1–-60.

\bibitem{chen} CHEN, L. H. Y., GOLDSTEIN, L. and SHAO, Q--M.  \emph{Normal Approximation By Stein's Method.} Springer, 2011.

\bibitem{elezovic} ELEZOVI\'C, N., GIORDANO, C. and PE\v{C}ARI\'C, J.  The best bounds in Gautschi's inequality. \emph{Math. Inequal. Appl.} $\mathbf{3}$ (2000), pp. 239-–252.

\bibitem{gaunt inequality} GAUNT, R. E.  Inequalities for modified Bessel functions and their integrals.  \emph{J. Math. Anal. Appl.} (2014),
doi:10.1016/j.jmaa.2014.05.083

\bibitem{goldstein} GOLDSTEIN, L. and REINERT, G.  Stein's Method and the zero bias transformation with application to simple random sampling.  \emph{Ann. Appl. Probab.} $\mathbf{7}$ (1997), pp. 935--952.

\bibitem{goldstein1} GOLDSTEIN, L. and RINOTT, Y.  Multivariate normal approximations by Stein's method and size
bias couplings.  \emph{J. Appl. Probab.} $\mathbf{33}$ (1996), pp. 1--17. 

\bibitem{gotze} G\"{O}TZE, F.  On the rate of convergence in the multivariate CLT.  \emph{Ann. Probab.} $\mathbf{19}$ (1991), pp. 724--739.

\bibitem{hall} HALL, P.  \emph{The Bootstrap and Edgeworth Expansion.}  Springer--Verlag, 1991. 

\bibitem{hipp} HIPP, C. and MATTNER, L.  On the normal approximation to symmetric binomial distributions.  \emph{Theory Probab. Appl.} $\mathbf{52}$ (2007), pp. 610--617.

\bibitem{meckes} MECKES, E.  On Stein's method for multivariate normal approximation. \emph{IMS Collect.} $\mathbf{5}$ (2009), pp. 153–-178.

\bibitem{priestley} PRIESTLEY, H. A.  \emph{Introduction to Integration.} Oxford University Press, 1997.

\bibitem{raic clt} RAI\v{C}, M.  A multivariate CLT for decomposable random vectors with finite second moments. \emph{J. Theoret. Probab.} $\mathbf{17}$ (2004) pp. 573-–603.

\bibitem{reinert 0}  REINERT, G.  Three general approaches to Stein's method. In \emph{An Introduction
to Stein's Method. Lect. Notes Ser. Inst. Math. Sci. Natl. Univ. Singap.} Singapore Univ. Press, Singapore. $\mathbf{4}$ (2005), pp. 183–-221.

\bibitem{reinert 1} REINERT, G. and R\"OLLIN, A.  Multivariate Normal Approximations with Stein's Method of Exchangeable Pairs Under a General Linearity Condition.  \emph{Ann. Probab.} $\mathbf{37}$ (2009), pp. 2150--2173.

\bibitem{stein} STEIN, C.  A bound for the error in the normal approximation to the the distribution of a sum of dependent random variables.  In \emph{Proc. Sixth Berkeley Symp. Math. Statis. Prob.} (1972), vol. 2, Univ. California Press, Berkeley, pp. 583--602.

\bibitem{stein2} STEIN, C.  \emph{Approximate Computation of Expectations.} IMS, Hayward, California, 1986.

\bibitem{winkelbauer} WINKELBAUER, A.  Moments and absolute moments of the normal distribution. arXiv:1209.4340, 2012.

\end{thebibliography}
\end{document}